%%%%%%%%%%%%%%%%%%%%%%% file template.tex %%%%%%%%%%%%%%%%%%%%%%%%%
%
% This is a general template file for the LaTeX package SVJour3
% for Springer journals.          Springer Heidelberg 2010/09/16
%
% Copy it to a new file with a new name and use it as the basis
% for your article. Delete % signs as needed.
%
% This template includes a few options for different layouts and
% content for various journals. Please consult a previous issue of
% your journal as needed.
%
%%%%%%%%%%%%%%%%%%%%%%%%%%%%%%%%%%%%%%%%%%%%%%%%%%%%%%%%%%%%%%%%%%%
%
% First comes an example EPS file -- just ignore it and
% proceed on the \documentclass line
% your LaTeX will extract the file if required
% [arxiv_v2: filecontents example.eps stripped, 188 chars]
\RequirePackage{fix-cm}
\documentclass[smallextended]{svjour3}       % onecolumn (second format)
\smartqed  % flush right qed marks, e.g. at end of proof
\usepackage{graphicx,epstopdf,amsmath,amsfonts,amssymb,
color,varwidth,bm,bbm,nicefrac,amsfonts}
\allowdisplaybreaks[4]
\usepackage[top=1.1in, bottom=1.2in, left=1.5in, right=1.5in]{geometry}
\newtheorem{assumption}[theorem]{Assumption}

\newcommand{\R}{\mathbb{R}}
\newcommand{\N}{\mathbb{N}}
\newcommand{\E}{\mathbb{E}}
\renewcommand{\H}{\mathbb{H}}

\newcommand{\dd}{\mathrm{d}}
%
% \usepackage{mathptmx}      % use Times fonts if available on your TeX system
%
% insert here the call for the packages your document requires
%\usepackage{latexsym}
% etc.
%
% please place your own definitions here and don't use \def but
% \newcommand{}{}
%
% Insert the name of "your journal" with
% \journalname{myjournal}
%
\begin{document}

\title{Strong convergence rates for a full discretization of stochastic wave equation with nonlinear damping\thanks{This work was supported by NSF of China (Nos. 12471394, 12071488, 12371417) and NSF of Hunan Province (No. 2020JJ2040).
This work was partially supported by STINT and NSFC Joint China-Sweden Mobility programme (project nr. CH2016-6729).
The work of D. Cohen was partially supported by the Swedish Research Council (VR) (projects nr. 2018-04443).}
}
%\subtitle{Do you have a subtitle?\\ If so, write it here}

\titlerunning{Strong approximations for a full discretization of SWE with nonlinear damping}        % if too long for running head

\author{Meng Cai  \and David Cohen \and Xiaojie Wang %etc.
}

%\authorrunning{Short form of author list} % if too long for running head

\institute{Meng Cai \at
School of Statistics and Mathematics, Central University of Finance and Economics, Beijing, China\\
%              School of Mathematics and Statistics, HNP-LAMA, Central South University, Changsha, China\\
%              LSEC, ICMSEC, Academy of Mathematics and Systems Science, Chinese Academy of Sciences, Beijing, \\
              \email{mcai@lsec.cc.ac.cn}           %  \\
%             \emph{Present address:} of F. Author  %  if needed
           \and
           David Cohen \at
             Department of Mathematical Sciences,
             Chalmers University of Technology \& University of Gothenburg,
              G\"{o}teborg, Sweden\\
              \email{david.cohen@chalmers.se}
           \and
           Xiaojie Wang \at
             School of Mathematics and Statistics, HNP-LAMA, Central South University, Changsha, China\\
              \email{x.j.wang7@csu.edu.cn}
}

\date{Received: date / Accepted: date}
% The correct dates will be entered by the editor

\maketitle

\begin{abstract}
The paper establishes the strong convergence rates of a spatio-temporal full discretization
of the stochastic wave equation with nonlinear damping in dimension one and two.
We discretize the SPDE by applying a spectral Galerkin method in space and a modified implicit exponential Euler scheme in time.
The presence of the super-linearly growing damping in the underlying model brings challenges into the error analysis.
To address these difficulties, we first achieve upper mean-square error bounds, and then obtain
mean-square convergence rates of the considered numerical solution. This is done without requiring the moment bounds of the full approximations.
The main result shows that, in dimension one, the scheme admits a convergence rate of  order $\tfrac12$ in space and
order $1$ in time.
In dimension two, the error analysis is more subtle and can be done at the expense of an order reduction due to an infinitesimal factor.
Numerical experiments are performed and confirm our theoretical findings.
\keywords{stochastic wave equation \and
nonlinear damping \and strong approximation \and spectral Galerkin method \and modified exponential Euler scheme}
% \PACS{PACS code1 \and PACS code2 \and more}
\subclass{60H35 \and 60H15 \and 65C30}
\end{abstract}

\section{Introduction}\label{sec:introduction}
Hyperbolic stochastic partial differential equations (SPDEs) play an essential role in a range of real application areas,
see below for examples. In the last decade, these SPDEs have attracted considerable attention from both a theoretical and a numerical point of view.
One of the fundamental hyperbolic SPDE is the stochastic wave equation (SWE). Stochastic wave equations are
used for instance to model the motion of a vibrating string \cite{Cabana1972on} or the motion of a strand of DNA in a fluid \cite{dalang2009minicourse}.
A damping term is often included to the wave equation to model energy dissipation
and amplitude reduction, see for instance the references \cite{MR1944756,barbu2007stochastic,MR2244432,MR3078824,MR2426123,nguyen2022polynomial,pardoux1975}
on stochastic wave equations with damping.

In the present work, we focus on the strong approximation of the solution to the following SWE with nonlinear damping in a domain $\mathcal{D}$:
\begin{equation}\label{eq:SWE}
\begin{split}
\left\{
    \begin{array}{lll}
    \dd u(t) = v(t) \, \dd t,
    \\
    \dd v(t) = \left( \Delta u(t) + F ( v(t) )\right) \, \dd t + \dd W^Q(t),
    \quad  t \in (0, T],
    \\
     u(0) = u_0, \, v(0) = v_0,
    \end{array}\right.
\end{split}
\end{equation}
with homogeneous Dirichlet boundary conditions.
Here, $\Delta$ denotes the Laplace operator and
$\mathcal{D}\subset \R^d$, with $d \in \{ 1, 2\}$, is a bounded domain with smooth boundary $\partial \mathcal{D}$.
By Newton's second law, \eqref{eq:SWE} describes the displacement field of a particle suspended in a randomly continuous medium, under the impact of the stochastic force. The interactions with surrounding particles are represented by the Laplacian and a nonlinear damping force depends on the velocity.
Precise assumptions on $F$, the noise $W^Q$ on a
given probability space $(\Omega,\mathcal{F},\mathbb{P},\{\mathcal{F}_t\}_{t\geq 0})$
as well as the initial value $(u_0,v_0)$ are provided in the next section.

As opposed to the large amount of works on the numerical analysis of SPDEs of parabolic type,
see for instance the books \cite{kruse2014strong,lord2014introduction} and references therein for the globally Lipschitz setting
and \cite{becker2017strong,Arnulf2013Galerkin,brehier2019strong,cai2021weak,campbell2018adaptive,cui2019strong,feng2017finite,gyongy2016convergence,kovacs2015backward,liu2021strong,qi2019optimal,wang2020efficient} for the non-globally Lipschitz setting,
the literature on the numerical analysis of SPDEs of hyperbolic type is relatively scarce.
The numerical analysis of SWEs without damping has been investigated by several authors, see for example
\cite{anton2016full,Cao2007spectral,cohen2022numerical,cohen2013trigonometric,cohen2016fully,cui2019energy,cui2022semi-implicit,Klioba2023pathwise,kovacs2020weak,kovacs2010finite,li2022finite,quer2006space,walsh2006numerical,wang2015exponential,wang2014higher}. For instance, in the work~\cite{anton2016full},
the strong convergence rate of a fully discrete scheme for a stochastic wave equation driven by (a possibly cylindrical) $Q$-Wiener process is obtained together with an almost trace formula; Walsh in \cite{walsh2006numerical} used an adaptation of the leapfrog discretization to construct a fully discrete finite difference scheme which achieves a convergence order of $\tfrac 12$ in both time and space for an SPDE driven by space-time white noise; the authors of \cite{wang2014higher} presented strong approximations of higher order by using linear functionals of the noise process in the time-stepping schemes. For stochastic strongly damped wave equations, Qi and Wang in \cite{qi2019error} investigated the regularity and strong approximations of a full discretization performed by a standard finite element method in space and a linear implicit
Euler--Maruyama scheme in time. These authors also analyzed an accelerated exponential scheme in \cite{qi2017accelerated}.
For SWEs with weak damping, the authors of \cite{barbu2007stochastic} proved existence and uniqueness of an invariant measure.
With regard to the weak convergence analysis, we refer to the work~\cite{brehier2018weak} for a spatial spectral Galerkin approximations of a damped-driven
stochastic wave equation.
We also refer to the recent work \cite{lei2023numerical}, where the authors analyzed the approximation of the invariant distribution for stochastic damped wave equations.
Most of the aforementioned papers are concerned with SPDEs having globally Lipschitz coefficients.
For SWEs with a cubic nonlinearity, we are aware of the following references. The work \cite{schurz2008analysis} studied a nonstandard partial-implicit midpoint-type difference method to control the energy functional of the SPDE. 
The recent work \cite{cui2019energy} combines a splitting technique with the averaged vector field method,
and applies the spectral Galerkin method in the spatial direction to present an energy-preserving exponentially integrable numerical method.
Combining these structure-preserving properties with regularity estimates of the exact and the numerical solutions, the authors of the work \cite{cui2019energy}
then obtain strong convergence rates, even for low regularity noise, of the numerical scheme when applied to semilinear stochastic wave equation without a damping term.
Finally, we recall that the existence and uniqueness of an invariant measure for the underlying stochastic wave equation have been proven in \cite{barbu2007stochastic}.
An interesting question could be to investigate numerical approximations of such invariant measure, which relies on a long-time error analysis. This will be the subject of a future work.

In the present paper, we make a further contribution to the numerical analysis of SWEs with non-globally Lipschitz coefficients.
Indeed, we prove strong convergence rates of a full discretization of the SPDE~\eqref{eq:SWE}
under certain assumptions allowing for super-linearly growing coefficients. To do this, we first spatially discretize the SPDE (see the abstract equation~\eqref{eq:abstract-SPDE} below)
by a spectral Galerkin method (see equation~\eqref{eq:spectral-galerkin}).
We then propose a modified implicit exponential Euler scheme (see equation~\eqref{eq:full-discretization}) applied to the spectral Galerkin approximation. 
The main convergence results (see Theorems~\ref{thm:space}~and~\ref{thm-main2} below)
show that, under Assumptions~\ref{assum:nonlinearity} and~\ref{assum:noise-term} and further technical conditions,
the proposed fully discrete scheme strongly converges with order
$\tfrac 12$ in space and $1$ in time.
More precisely, let $X(t)$ and $X_{N,m}$
be the solution of the SWE with nonlinear damping~\eqref{eq:abstract-SPDE} and of the full approximation solution~\eqref{eq:full-discretization}. For $N,M \in \N$ and $d=1$,
there exists a constant $\widehat C > 0$, independent of the discretization parameters, such that
\begin{equation*}
\sup_{ 0 \leq m \leq M}
\big\| X(t_m) - X_{N,m}  \big\|_{L^2 (\Omega;\H^1) }
\leq \widehat C \lambda_N^{ -\frac12 } + \widehat C \, \tau,
\end{equation*}
where $\tau=\tfrac TM$ is the time stepsize, $\lambda_N$ is the $N$-th eigenvalue of the Laplacian, and the product space $\H^1$ is defined in the next section.
In addition, for $d=2$, one gets the error estimates
\begin{equation*}
\sup_{ 0 \leq m \leq M}
\big\| X(t_m) - X_{N,m}  \big\|_{L^2 (\Omega;\H^1) }\leq \widehat C \lambda_N^{ -\frac12 + \epsilon } +
\widehat C \, \tau^{1-\epsilon}.
\end{equation*}
Here, $\epsilon > 0$ is an arbitrary small parameter.

The error analysis for the space dimension $d = 3$ or higher is non-trivial. In particular, it is limited by the regularities of the mild solution $X(t)$ and spatial semi-dicretization $X^N(t)$.

We now illustrate the main steps behind the proofs of our convergence results.
For the spatial convergence analysis, we start by introducing an auxiliary process $\widetilde{X}^N(t)$ given by \eqref{eq:auxiliary-process-Galerkin} and then separate the strong error in space into two terms,
\begin{equation*}
\begin{split}
\| X(t)  - X^N(t) \|_{L^2(\Omega;\H^1)}
 & \leq  \| X(t) - \widetilde{X}^N(t) \|_{L^2(\Omega;\H^1)}
       +  \| \widetilde{X}^N(t) - X^N(t) \|_{L^2(\Omega;\H^1)}
\\ &
=: Err_1 + Err_2.
\end{split}
\end{equation*}
The bound for the term $Err_1$ can be done by a standard approach.
The estimate of the second term $Err_2$ is not easy and heavily relies on the global monotonicity property of the nonlinearity, Gronwall's lemma, suitable uniform moment bounds for the auxiliary process $\widetilde{X}^N$ and the numerical approximations as well as the bounds of $Err_1$.
For the temporal convergence analysis, motivated by the approach from the work~\cite{wang2020meanEuler}
for finite-dimensional stochastic differential equations, we first show an upper bound of the temporal error
in Proposition~\ref{prop:upper-bounds}. This result then enables us to prove mean-square convergence rates for the considered SPDE, without requiring an a priori high-order moment estimates of the fully discrete solution.

It is worthwhile to mention that the error estimates for dimension two
is more involved than for dimension one since the Sobolev embedding inequality $\dot{H}^1 \subset V:=C(\mathcal D; \mathbb R)$
fails to hold in dimension two.
In order to overcome this difficulty, we combine  H\"{o}lder's inequality and the Sobolev embedding theorem
to bound the nonlinearity $F(v)$ in a weak sense (see Lemma~\ref{lem:F-theta} below).
This causes an order reduction in the rate of convergence due to an infinitesimal factor in the convergence analysis for $d=2$.

The outline of the paper is as follows.
We start by collecting some notation, useful results and then show
the well-posedness of the considered problem in Section~\ref{sec:preliminaries}.
Section~\ref{sec:spatial-analysis} and Section~\ref{sec:full-analysis} are devoted to the mean-square convergence analysis in space and time, respectively. Numerical experiments are presented in
Section~\ref{sec;Numerical experiments} and illustrate the obtained convergence rates.
The paper ends with a conclusion.
\section{The stochastic wave equation with nonlinear damping}
\label{sec:preliminaries}

In this section, we set notation and show the well-posedness of the stochastic wave equation with nonlinear damping.

Consider two separable Hilbert spaces $U$ and $H$ with norms denoted by
$\| \cdot \|_U$ and $\| \cdot \|_H$ respectively. We denote by $\mathcal{L}(U;H)$ the space of bounded linear operators
from $U$ to $H$ with the usual operator norm $\| \cdot \|_{\mathcal{L}(U;H)}$.
As an important subspace of $\mathcal{L}(U;H)$, we let $\mathcal{L}_2(U;H)$ be the set of Hilbert--Schmidt operators with the norm
\begin{equation*}
\| T \|_{\mathcal{L}_2(U;H)}
:=\Big( \sum_{k=1}^\infty  \|  Te_k  \|_H^2 \Big)^{\frac12},
\end{equation*}
where $\{e_k\}_{k=1}^\infty$ is an arbitrary orthonormal basis of $U$.
If $H=U$, we write $\mathcal{L}(U):=\mathcal{L}(U;U)$ and $\mathcal{L}_2 (U):=\mathcal{L}_2(U;U)$ for short.
Let $Q \in \mathcal{L}(U)$ be a self-adjoint, positive semidefinite operator. As usual,
we introduce the separable Hilbert space $U_0 :=Q^{\frac12}(U)$ with the inner product
$ \langle u_0, v_0 \rangle_{U_0}:= \langle Q^{-\frac12} u_0,Q^{-\frac12} v_0  \rangle_U$.
Furthermore, the set $\mathcal{L}_2^0$ denotes the space of Hilbert--Schmidt operators from $Q^{1/2}(U)$ to $U$
with norm $ \| T \|_{\mathcal{L}_2^0} =  \| TQ^{1/2}  \|_{\mathcal{L}_2 (U)}$. Finally,
let $(\Omega,\mathcal{F},\mathbb{P},\{\mathcal{F}_t\}_{t\geq 0})$ be a filtered probability space
and $L^p(\Omega;U)$ be the space of $U$-valued integrable random variables with norm
$
\| u \|_{L^p(\Omega;U)}
    := \big( \E  [ \| u \|_U^p] \big)^{\frac 1p} < \infty
$
for any $ p \geq 2 $.

In the sequel, we take  $U:=L^2( \mathcal{D};\R)$ with norm  $\| \cdot \|$ and inner product $\langle \cdot,\cdot \rangle$.
We also set $V:=C(\mathcal{D}; \R)$ to be the Banach space of all continuous functions endowed with the supremum norm.
We let $-\Lambda:=\Delta$ denote the Laplacian with ${\rm{Dom}}(\Lambda) = H^2(\mathcal{D}) \cap H^1_0(\mathcal{D})$.
Here, $ H^m(\mathcal{D}) $ denotes the standard Sobolev spaces of integer order $m \geq 1$.
For $\alpha\in\mathbb{R}$, we then define the separable Hilbert spaces
$\dot{H}^\alpha={\rm{Dom}}(\Lambda^{\frac \alpha 2})$ equipped with inner product
\begin{equation*}
\langle u , v \rangle_\alpha
:= \langle \Lambda^{\frac \alpha 2} u ,
      \Lambda^{\frac \alpha 2} v \rangle
=\sum_{j=1}^\infty
\lambda_j^\alpha
\langle  u , \varphi_j \rangle
\langle  v , \varphi_j \rangle,
\quad u,v \in \dot{H}^\alpha,
\end{equation*}
where $\{(\lambda_j,\varphi_j)\}_{j=1}^\infty$ are the eigenpair of $\Lambda$ with $\{\varphi_j\}_{j=1}^{\infty}$
being orthonormal eigenfunctions. The corresponding norm in the space $\dot H^\alpha$ is defined by
$\| u \|_{\alpha} = \langle u , u \rangle_\alpha ^{\nicefrac12}$. Furthermore, we introduce the product space
$
\H^\alpha: =\dot{H}^\alpha  \times  \dot{H}^{\alpha-1}
$
with the inner product
\begin{equation*}
\langle Y , Z \rangle_{\H^\alpha } :=
      \langle y_1 , z_1 \rangle_{ \alpha }
         +  \langle y_2 , z_2 \rangle_{ \alpha - 1 }
         \quad \text{for} \quad Y=\bigl[y_1,y_2\bigr]^T \quad\text{and}\quad Z=\bigl[z_1,z_2\bigr]^T.
\end{equation*}
The induced  norm is denoted by
$\| X \|_{\H^\alpha } := \bigl( \| x_1 \|_\alpha^2
      + \| x_2 \|_{\alpha-1}^2 \bigr)^{\nicefrac12}$
for $X=[x_1,x_2]^T$.
For the special case $\alpha = 0$, we define
$\H:=\H^0=\dot H^0\times\dot H^{-1}$
and $\dot{H}^0 = U =L^2( \mathcal{D} ; \R )$.

To follow the semigroup framework of the book \cite{da2014stochastic},
we formally transform the SPDE~\eqref{eq:SWE} into the following abstract Cauchy problem:
\begin{equation}\label{eq:abstract-SPDE}
 \dd X(t) = A X(t) \, \dd t + {\bf F}(X(t)) \, \dd t + B \dd W^Q(t),
 \quad
 X(0) = X_0,
\end{equation}
where
\begin{equation*}
X =X(t)=\left[  \begin{array}{ccc}
     u(t) \\  v(t)   \end{array}   \right]
,
\:
A= \left[  \begin{array}{ccc}   0 & I \\  -\Lambda & 0
                       \end{array} \right]
,
\:
{\bf F}(X)= \left[ \begin{array}{ccc} 0 \\ F(v) \end{array} \right]
,
\:
B = \left[ \begin{array}{ccc}  0 \\  I   \end{array} \right],
\:
X_0 =
 \left[
 \begin{array}{ccc}
     u_0 \\ v_0
 \end{array}
 \right].
\end{equation*}
Here, $X_0$ is assumed to be an $\mathcal{F}_0$-measurable random variable.
The operator $A$ with
$\text{Dom}(A)= \H^1 = \dot{H}^1 \times \dot{H}^0$
is the generator of a strongly continuous semigroup
$\left(E(t)\right)_{t \geq 0}$ on $\H^1$, written as
\begin{equation*}
E(t)= e^{t A} =
\left[
 \begin{array}{ccc}
     C(t) &
     \Lambda^{-\frac12} S(t) \\
     -\Lambda^{\frac12} S(t) &
     C(t)
 \end{array}
  \right],
\end{equation*}
where
$C(t) := \cos( t \Lambda^{\frac12})$
 and $S(t) := \sin( t \Lambda^{\frac12})$
 are the so-called cosine and sine operators. These operators are bounded in the sense that
 $ \| C(t) \varphi \| \leq \| \varphi \|$ and
 $ \| S(t) \varphi \| \leq \| \varphi \|$ hold for
 any $ \varphi \in U$.
The trigonometric identity $\big\| S(t)  \varphi \big\|^2 +
         \big\| C(t)  \varphi \big\|^2
               = \big\|  \varphi \big\|^2$
also holds for any $\varphi \in U$.
Additionally, due to the commutative property between
$C(t)$, $S(t)$ and $\Lambda^{\alpha}$ for $\alpha \in \R$,
one can check the stability property of the semigroup, that is
$ \| E(t) X \|_{\H^\gamma} \leq \| X \|_{\H^\gamma}$
for $t \geq 0$, $\gamma \in \R$ and $X \in \H^\gamma$.
Finally, we recall the following lemma which will be used frequently in our convergence analysis.
\begin{lemma}\label{lem:operator-estimate}
For $t \geq s \geq 0$ and $\gamma \in [0,1]$, we have
\begin{equation}\label{eq:cosine-sine}
\big\|
\big( S(t) - S(s) \big) \Lambda^{-\frac \gamma2}
\big\|_{\mathcal{L}(U)} \leq \widehat C (t-s)^{\gamma},
\qquad
\big\|
\big( C(t) - C(s) \big) \Lambda^{-\frac \gamma2}
\big\|_{\mathcal{L}(U)} \leq \widehat C (t-s)^{\gamma}
\end{equation}
for some constant $\widehat C>0$.
Moreover, for $X\in\H^\gamma$ it holds that
\begin{equation}\label{eq:semigroup}
\big\|
\big( E(t) - E(s) \big) X
\big\|_{\H} \leq \widehat C (t-s)^{\gamma} \|X\|_{\H^{\gamma}}
\end{equation}
for some constant $\widehat C>0$.
\end{lemma}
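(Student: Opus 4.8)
The plan is to diagonalize everything in the eigenbasis $\{\varphi_j\}_{j=1}^\infty$ of $\Lambda$ and reduce the operator bounds to a single scalar trigonometric estimate. For the first claim, fix $\varphi\in U$ and use $S(t)\varphi_j=\sin(t\sqrt{\lambda_j})\varphi_j$ to write
\begin{equation*}
\big\| \big(S(t)-S(s)\big)\Lambda^{-\frac\gamma2}\varphi \big\|^2
= \sum_{j=1}^\infty \lambda_j^{-\gamma}\, \big| \sin(t\sqrt{\lambda_j}) - \sin(s\sqrt{\lambda_j}) \big|^2 \, \langle \varphi,\varphi_j\rangle^2 .
\end{equation*}
The core of the argument is the scalar bound $|\sin a-\sin b|\le\min\{2,|a-b|\}$ (combining the trivial bound with the mean value theorem), together with the elementary interpolation inequality $\min\{2,x\}\le 2^{1-\gamma}x^{\gamma}$, valid for $x\ge0$ and $\gamma\in[0,1]$. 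Applied with $x=(t-s)\sqrt{\lambda_j}$ this gives $|\sin(t\sqrt{\lambda_j})-\sin(s\sqrt{\lambda_j})|\le 2^{1-\gamma}(t-s)^{\gamma}\lambda_j^{\gamma/2}$; squaring cancels the weight $\lambda_j^{-\gamma}$, and summing over $j$ leaves $2^{2-2\gamma}(t-s)^{2\gamma}\|\varphi\|^2$, which yields the first inequality in \eqref{eq:cosine-sine} with $\widehat C=2^{1-\gamma}\le2$. The bound for $C(t)-C(s)$ is obtained in exactly the same way from $|\cos a-\cos b|\le\min\{2,|a-b|\}$.

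For \eqref{eq:semigroup}, I would write $E(t)-E(s)$ as the $2\times2$ operator matrix with diagonal entries $C(t)-C(s)$ and off-diagonal entries $\pm\Lambda^{\pm1/2}(S(t)-S(s))$, apply it to $X=[x_1,x_2]^T\in\H^\gamma$, and measure the two resulting components in the $\dot H^0$- and $\dot H^{-1}$-norm, respectively. In each of the four resulting terms the strategy is to use the commutativity of $C(t),S(t)$ with powers of $\Lambda$ to relocate a factor $\Lambda^{-\gamma/2}$ next to the difference operator and then invoke \eqref{eq:cosine-sine}; for instance $\|\Lambda^{-1/2}(S(t)-S(s))x_2\| = \|(S(t)-S(s))\Lambda^{-\gamma/2}\,\Lambda^{(\gamma-1)/2}x_2\| \le \widehat C(t-s)^{\gamma}\|x_2\|_{\gamma-1}$ since $\Lambda^{(\gamma-1)/2}x_2\in U$, and the three remaining terms (which produce $\|x_1\|_\gamma$, $\|x_1\|_\gamma$, $\|x_2\|_{\gamma-1}$) are handled identically. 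Summing, using $(a+b)^2\le2a^2+2b^2$ and $\|x_1\|_\gamma^2+\|x_2\|_{\gamma-1}^2=\|X\|_{\H^\gamma}^2$, gives \eqref{eq:semigroup}.

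The only genuinely delicate point is the scalar interpolation step that converts the two crude bounds — the uniform one and the Lipschitz one — into a fractional estimate carrying precisely the power $\lambda_j^{\gamma/2}$ needed to absorb the weight $\lambda_j^{-\gamma}$; the restriction $\gamma\in[0,1]$ is exactly what makes this work and what keeps all the Sobolev exponents appearing in the second part within the admissible range. Everything else is routine spectral calculus and bookkeeping with the commutation relations.
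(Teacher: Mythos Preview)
Your argument is correct and is the standard spectral-calculus proof of these bounds. The paper does not actually prove this lemma; it simply cites external references (Kov\'acs--Larsson--Lindgren and Anton--Cohen--Larsson--Wang) for the proof, so there is no in-paper argument to compare against, but the eigenbasis diagonalization together with the scalar interpolation $\min\{2,x\}\le 2^{1-\gamma}x^\gamma$ that you use is exactly the approach one finds in those sources.
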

We refer to \cite{anton2016full,MR3123856}, for instance, for a proof of this lemma.
To show the well-posedness of the SPDE~\eqref{eq:abstract-SPDE}, we make the following assumptions on the nonlinear term and
on the noise process, see \cite{barbu2007stochastic}.
\begin{assumption}\label{assum:nonlinearity}
The nonlinear term $F$ is assumed to be the Nemytskij operator associated to a real-valued function $f: \R \to \R$
given by
\begin{align}
F (v)(x)
=f ( v(x))
\quad
x \in \mathcal{D},
\end{align}
where $f$ is assumed to be twice continuous differentiable and satisfies  for $y\in \R$,
\begin{align*}
y f(y) & \leq C_0 \left( 1 + |y|^2 \right), \quad\text{for some constant}\quad C_0 > 0,
\\  f'(y) & \leq C_1, \quad\text{for some constant}\quad C_1 \in \R,
\\  |  f'(y) | & \leq C_2 \left( 1 + |y|^{\gamma - 1}\right),
\quad\text{for some constant}\quad C_2 >0, \gamma \geq 2,
\\  |  f''(y) | & \leq C_3 ( 1 + |y|^{\gamma - 2}), \quad\text{for some constant}\quad C_3 >0,\gamma \geq 2.
\end{align*}
\end{assumption}
\noindent A typical example of a nonlinearity satisfying the above assumptions is $f(y)= y - y^3$.
Indeed, standard calculations give
\begin{align*}
 y f( y ) & = y^2 - y^4 \leq   1 + y^2,
\\  f'( y ) & = 1 - 3 y^2  \leq 1,
\\  |  f'( y ) | & = | 1 - 3 y^2 | \leq 3 \left( 1 + y^{2} \right),
\\  |  f''(y) | & = 6 | y | \leq 6 \left( 1 + | y | \right).
\end{align*}
This shows Assumption~\ref{assum:nonlinearity} for $f(y)= y- y^3$.

We highlight that the above function $f$ does not satisfy a globally Lipschitz condition:
If this would have been the case, then there would exist a positive constant $L$ such that for any $y_1,y_2 \in \R$,
\begin{equation*}
\big| f ( y_1 ) - f ( y_2 ) \big| \leq L  \big| y_1 - y_2 \big|.
\end{equation*}
Taking $y_2 = 0$ would yield $\big|  y_1 - y_1^3 \big| \leq L  \big| y_1 \big|$
and thus $\big|  1 - y_1^2 \big| \leq L $ for $y_1 \in \R \setminus \{0\}$. This gives a contradiction.

\begin{assumption}\label{assum:noise-term}
Let
$\{ W^Q(t)\}_{t \in [0,T]}$
be a standard {Q}-Wiener process such that the covariance operator $Q= Q^{\frac12} \circ  Q^{\frac12}$ satisfies
\begin{align}\label{eq:ass-AQ-condition}
\big\| \Lambda^{\frac12} Q^{\frac12}
              \big\|_{\mathcal{L}_2(U)} < \infty.
\end{align}
\end{assumption}
\noindent An example of a covariance operator satisfying the above condition is $Q = \Lambda^{- \delta}$,
$\delta > 1 + \frac d2$.
To simplify the notation, we often write $\mathcal{L}_2$ instead of $\mathcal{L}_2(U)$ whenever it is clear from the context which one is meant.

The well-posedness of the SPDE~\eqref{eq:abstract-SPDE} and the spatial regularity of the mild solution $X(t)$ are given in the following theorem.
\begin{theorem}\label{thm:wellposedness-regularity-Z}
Let $T>0$. Under Assumptions  \ref{assum:nonlinearity} and \ref{assum:noise-term},
the stochastic wave equation \eqref{eq:abstract-SPDE} admits a unique mild solution in $\mathbb{H}^1$, given by, for each $ t \in [ 0, T]$,
\begin{equation}\label{eq:mild-abstract}
X(t) = E(t) X_0 + \int_0^t E(t-s) {\bf F}(X(s)) \, \dd s
                 + \int_0^t E(t-s) B \, \dd W^Q(s),
                 \, a.s.
\end{equation}
Additionally, if we assume that the initial value satisfies
$ \| X_0 \|_{L^p(\Omega; \H^2)} < \infty$ for some $p \geq 2$,
then we get the bound
\begin{equation}\label{eq:moment-bounds-Z(t)}
\sup_{t \in [0,T]} \| X(t) \|_{L^p(\Omega;\H^2)} < \infty.
\end{equation}
\end{theorem}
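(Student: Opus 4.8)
The plan is to follow the classical semigroup/fixed-point strategy of Da Prato--Zabczyk, adapted to handle the super-linearly growing nonlinearity $F$ via the one-sided (monotonicity-type) bound $f'(v) \le C_1$ in Assumption~\ref{assum:nonlinearity}. First I would establish well-posedness. Since $F$ is only locally Lipschitz (because $|f'(v)| \le C_2(1+|v|^{\gamma-1})$), I would truncate: introduce $F_R$ by cutting off $f$ outside a ball of radius $R$ in $V = C(\mathcal D;\R)$, so that $F_R$ is globally Lipschitz from $\dot H^1 \hookrightarrow V$ (using the Sobolev embedding $\dot H^1 \subset V$ in dimension $d=1$; for $d=2$ one works in a slightly smaller space, which is exactly the source of the $\epsilon$-loss mentioned in the introduction, but here we only need existence, so a standard embedding suffices). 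For each $R$, a Banach fixed-point argument in the space of $\H^1$-valued adapted processes with $\sup_{t}\|\cdot\|_{L^2(\Omega;\H^1)}$-norm — using the stability bound $\|E(t)X\|_{\H^1}\le\|X\|_{\H^1}$ and the Itô isometry together with Assumption~\ref{assum:noise-term} for the stochastic convolution — yields a unique mild solution $X_R$ of the truncated equation. Then I would derive an a priori bound, uniform in $R$, on $\mathbb E\sup_{t\le T}\|X_R(t)\|_{\H^1}^2$ (or on a suitable energy-type functional), so that the stopping time $\tau_R = \inf\{t : \|X_R(t)\|_{\H^1} > R\}$ satisfies $\tau_R \to T$ a.s. as $R\to\infty$; patching the $X_R$ gives the unique mild solution $X$ satisfying~\eqref{eq:mild-abstract}.

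The a priori energy estimate is where the monotonicity enters. The key observation is that $\langle \mathbf F(X) - \mathbf F(Y), X - Y\rangle_{\H} = \langle F(v) - F(w), v - w\rangle_{-1}$, and using $f'(v) \le C_1$ together with the structure of the $\dot H^{-1}$ inner product, this pairing is bounded by $C_1\|v-w\|_{-1}^2$, which gives the global monotonicity property the introduction refers to. Likewise $\langle AX, X\rangle_{\H^1} = 0$ since $A$ generates a (semi)group of isometries on $\H^1$. Applying Itô's formula to $\|X(t)\|_{\H^1}^2$ (rigorously on the Galerkin level and then passing to the limit, or using the mild-form energy identity), the drift contributes at most $C_1\|X(s)\|_{\H^1}^2 + C\|\Lambda^{1/2}Q^{1/2}\|_{\mathcal L_2(U)}^2$, the stochastic integral is a martingale, and Gronwall's lemma closes the bound. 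The one-sided growth condition $vf(v)\le C_0(1+|v|^2)$ provides the alternative route through the physical energy functional if one prefers that.

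For the higher regularity claim~\eqref{eq:moment-bounds-Z(t)}, assuming $\|X_0\|_{L^p(\Omega;\H^2)}<\infty$, I would apply $\Lambda^{1/2}$ (i.e., work one derivative higher) to the mild formulation and repeat the estimate in the $\H^2$-norm. The semigroup stability $\|E(t)X\|_{\H^2}\le\|X\|_{\H^2}$ handles the first and third terms; the stochastic convolution is bounded in $L^p(\Omega;\H^2)$ by the factorization method (Da Prato--Kwapień--Zabczyk) together with Assumption~\ref{assum:noise-term}, which gives precisely the regularity of one spatial derivative on $v$. The drift term $\int_0^t E(t-s)\mathbf F(X(s))\,\dd s$ requires controlling $\|F(v(s))\|_{\dot H^1}$, and here the chain rule gives $\nabla F(v) = f'(v)\nabla v$, so $\|F(v)\|_{\dot H^1}\lesssim \|f'(v)\|_{V}\,\|v\|_{\dot H^1} \lesssim (1 + \|v\|_V^{\gamma-1})\|v\|_{\dot H^1}$, which in $d=1$ is controlled by $(1+\|v\|_{\dot H^1}^{\gamma-1})\|v\|_{\dot H^1}$ via Sobolev embedding. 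The main obstacle is that this bound is \emph{super-linear}, so a direct Gronwall argument fails; one must instead first bootstrap from an $\H^1$-pathwise bound (or use the exponential integrability of $v(t)$ alluded to after the statement of the main theorems, which controls $\mathbb E\exp(\eta\|v(t)\|_V^2)$-type quantities) to absorb the super-linear factor, and then run the $\H^2$-estimate with a stopping-time localization followed by Fatou. This interplay between the super-linear growth of $F$ and the need for $\H^2$-moment bounds is the delicate point of the whole theorem; everything else is routine semigroup analysis.
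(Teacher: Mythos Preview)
Your well-posedness sketch via truncation and a fixed-point argument is a reasonable route; the paper itself simply defers this part to the reference \cite{barbu2007stochastic}, so there is nothing substantive to compare there.

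The real divergence is in the $\H^2$-moment bound. You propose to work from the mild formulation and estimate $\|F(v(s))\|_{\dot H^1}$, correctly observe that this is super-linear in $\|v\|_{\dot H^1}$, and then label it ``the delicate point of the whole theorem,'' offering only vague fixes (bootstrap from an $\H^1$-pathwise bound, or an appeal to exponential integrability). This is where you miss the paper's key idea: there is no delicate point. The paper applies It\^o's formula directly to $\|X(t)\|_{\H^2}^p$. The drift contribution that appears is not $\|F(v)\|_{\dot H^1}$ but the \emph{pairing}
\[
\langle X,\mathbf F(X)\rangle_{\H^2}
=\langle v,F(v)\rangle_{\dot H^1}
=\langle \nabla v,\,f'(v)\,\nabla v\rangle
\le C_1\,\|\nabla v\|^2
= C_1\,\|v\|_{\dot H^1}^2,
\]
using only the one-sided condition $f'\le C_1$ from Assumption~\ref{assum:nonlinearity}. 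This is \emph{linear} in $\|X\|_{\H^2}^2$, so together with $\langle X,AX\rangle_{\H^2}=0$ and the bound $\|\Lambda^{1/2}Q^{1/2}\|_{\mathcal L_2}<\infty$ for the It\^o correction term, Gronwall closes the estimate immediately---no bootstrap, no exponential integrability, no stopping times.

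Your route is not a priori hopeless, but the remedies you sketch are not worked out, and the exponential-integrability result in the paper (Lemma~\ref{lem:exponential-integrability}) is stated under the strictly stronger hypothesis $\E[\exp(\|X_0\|_{\H^2}^p)]<\infty$, so invoking it here would strengthen the assumptions of the theorem. The lesson is the standard one for monotone-type nonlinearities: do not try to bound $\|F(v)\|$ in a strong norm; arrange the estimate so that only the pairing $\langle v,F(v)\rangle$ (or its gradient version) appears, where the one-sided bound does all the work.
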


\begin{proof}
The existence and uniqueness of the mild solution~\eqref{eq:mild-abstract} can be proven
as in the reference \cite{barbu2007stochastic}.
We now prove the spatial regularity~\eqref{eq:moment-bounds-Z(t)}.
An application of the  It\^{o} formula for $\| X(t) \|_{\H^2}^p$,
Young's inequality, properties of stochastic integrals, the assumptions on the inital value and
on the dissipativity of the nonlinear term $F$, and the assumption~\eqref{eq:ass-AQ-condition} on the $Q$-Wiener process
yield that
\begin{align*}
%\begin{split}
\E \big[  \| X(t) \|_{\H^2}^p \big]
        & =  \E \Big[ \| X_0 \|_{\H^2}^p \Big]
    + p \int_0^t \E \Big[ \| X(s) \|_{\H^2}^{p-2}
           \langle X(s) , \dd X(s) \rangle_{\H^2} \Big]
  \\ & \quad + \frac12 \sum_{j=1}^{\infty}
  \int_0^t \E \Big[  p \| X(s) \|_{\H^2}^{p-2}
  \langle B Q^{\frac12} \varphi_j,
          B Q^{\frac12} \varphi_j \rangle_{\H^2}
  \\ & \quad    + p (p-2) \| X(s) \|_{\H^2}^{p-4}
      \langle X(s),
          B Q^{\frac12} \varphi_j \rangle_{\H^2}
      \langle X(s),
          B Q^{\frac12} \varphi_j \rangle_{\H^2}  \Big] \dd s
  \\ & \leq\widehat C +  p \int_0^t \E \Big[ \| X(s) \|_{\H^2}^{p-2}
           \langle X(s) , A X(s) \rangle_{\H^2}   \Big] \dd s
  \\ & \quad +  p \int_0^t \E \Big[ \| X(s) \|_{\H^2}^{p-2}
           \langle X(s) , {\bf F}( X(s)) \rangle_{\H^2} \Big] \dd s
  \\ & \quad +  p \int_0^t \E \Big[ \| X(s) \|_{\H^2}^{p-2}
           \langle X(s) , B\, \dd W^{Q}(s)\rangle_{\H^2} \Big]
  \\ & \quad +\widehat C(p) \int_0^t \E \Big[   \| X(s) \|_{\H^2}^{p-2}
  \| \Lambda^{\frac12} Q^{\frac12} \|_{\mathcal{L}_2}^2 \Big] \dd s
  \\ & \leq\widehat C(p,T) + p
            \int_0^t   \E \Big[ \| X(s) \|_{\H^2}^{p-2}
     \langle \nabla v(s) , F'( v(s) )\nabla v(s) \rangle \Big] \dd s
  \\ & \quad  +\widehat C(p,T) \int_0^t
        \E \big[ \| X(s) \|_{\H^2}^{p} \big] \dd s
  \\ & \leq\widehat C(p,T) +\widehat C(p,T)
      \int_0^t \E \big[ \| X(s) \|_{\H^2}^{p} \big] \dd s.
%\end{split}
\end{align*}
Here, we used  $\langle X(s), A X(s) \rangle_{\H^2} =0$,
$\langle v(s),  F( v(s) )\rangle_{\dot{H}^1} =\langle \nabla v(s),  F'( v(s) )  \nabla  v(s)  \rangle$  and
$\| X(s) \|_{\H^2}^{p-2} \leq \| X(s) \|_{\H^2}^{p} +1 $ in the second inequality.
At last, an application of Gronwall's lemma finishes the proof.
\end{proof}

We conclude this section by introducing some basic inequalities especially useful when considering
the SPDE~\eqref{eq:abstract-SPDE} in dimension $d=2$.
Recall first the following  Sobolev embedding inequality,
(e.g., \cite[Theorem 7.57]{adams1975sobolev} and \cite[Lemma 3.1]{thomee2006galerkin}),
for sufficiently small $\epsilon > 0$ and $\theta \in (0,1)$,
\begin{equation}\label{eq:sobolev}
\dot{H}^{ 2 \epsilon } \subset L^{\frac 2 { 1 - 2 \epsilon } }, \,\,
\dot{H}^{ 1 - \theta } \subset L^{\frac 2 \theta}.
\end{equation}
Then, for $x\in L^{\frac 2 {1 + 2 \epsilon} }$, one has
\begin{equation}\label{eq:Lambda-minus-epsilon}
\begin{split}
\| \Lambda^{-\epsilon} x \| & =
  \sup_{ \| y \| = 1 } \big| \langle x , \Lambda^{-\epsilon} y
                                \rangle \big|
  \leq \sup_{ \| y \| = 1 } \| x \|_{L^{\frac 2 {1 + 2 \epsilon} }}
        \| \Lambda^{-\epsilon} y \|_{L^{\frac 2 {1 - 2 \epsilon} }}
 \\ & \leq\widehat C \sup_{ \| y \| = 1 }
        \| x \|_{L^{\frac 2 {1 + 2 \epsilon} }}
        \| \Lambda^{-\epsilon} y  \|_{2\epsilon}
      \leq\widehat C \| x \|_{L^{\frac 2 {1 + 2 \epsilon} }}.
\end{split}
\end{equation}
Concerning the nonlinear term $F$, we have the following useful lemmas.
\begin{lemma}\label{lem:F-theta}
Under Assumption~\ref{assum:nonlinearity}, we have,
for $d =1$, that
\begin{equation*}
\| F (v) \|_{1}
          \leq\widehat C \left( 1 + \|v\|_1^{\gamma} \right),
\quad
v\in\dot H^1
\end{equation*}
and for $d = 2$, $ \theta \in (0,1)$, that
\begin{equation*}
\| F (v) \|_{\theta}
          \leq\widehat C \left( 1 + \|v\|_1^{\gamma} \right),
          \quad
v\in\dot H^1.
\end{equation*}
\end{lemma}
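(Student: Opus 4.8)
The plan is to reduce the estimates on $\|F(v)\|_\theta$ (with $\theta=1$ when $d=1$) to pointwise bounds on $f$ and $f'$ coming from Assumption~\ref{assum:nonlinearity}, and then to invoke the Sobolev embeddings recorded in~\eqref{eq:sobolev}. First I would treat the one-dimensional case. Since $\dot H^1\subset V=C(\mathcal D;\R)$ in dimension one, any $v\in\dot H^1$ is bounded, and one computes $\nabla F(v)=f'(v)\nabla v$ a.e.\ by the chain rule for Nemytskij operators. Hence $\|F(v)\|_1=\|\Lambda^{1/2}F(v)\|\simeq\|\nabla F(v)\|\le\|f'(v)\|_{L^\infty}\|\nabla v\|$. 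Using $|f'(v)|\le C_2(1+|v|^{\gamma-1})$ from Assumption~\ref{assum:nonlinearity} and the embedding $\|v\|_{L^\infty}\le\widehat C\|v\|_1$, this gives $\|F(v)\|_1\le\widehat C(1+\|v\|_1^{\gamma-1})\|v\|_1\le\widehat C(1+\|v\|_1^{\gamma})$, which is the claimed bound. (One must also check $F(v)\in\dot H^1$, i.e.\ that the boundary trace vanishes; since $f(0)$ need not be zero one may have to use the $\dot H^1$-equivalent norm $\|\nabla\cdot\|$ appropriately, or note that the relevant quantity in later uses is $\|\Lambda^{1/2}F(v)\|$, which is all that is needed.)

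For $d=2$ the embedding into $V$ fails, so the idea is to estimate a \emph{weaker} norm $\|F(v)\|_\theta$ with $\theta\in(0,1)$ by duality, as in~\eqref{eq:Lambda-minus-epsilon}. Writing $\|F(v)\|_\theta=\|\Lambda^{\theta/2}F(v)\|$ is not directly convenient; instead I would pass through $\|F(v)\|_\theta\le\widehat C\|\nabla F(v)\|_{\theta-1}$ and dualize: $\|\nabla F(v)\|_{\theta-1}=\sup_{\|w\|_{1-\theta}\le1}|\langle\nabla F(v),w\rangle|$. Since $\nabla F(v)=f'(v)\nabla v$, Hölder's inequality with a suitable triple of exponents gives $|\langle f'(v)\nabla v,w\rangle|\le\|f'(v)\|_{L^{p_1}}\|\nabla v\|_{L^{p_2}}\|w\|_{L^{p_3}}$. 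We want $p_2=2$ (to produce $\|\nabla v\|=\|v\|_1$), and $\|w\|_{L^{p_3}}$ to be controlled by $\|w\|_{1-\theta}$, which by~\eqref{eq:sobolev} holds with $p_3=2/\theta$; this forces $p_1=2/(1-\theta)$. Then $\|f'(v)\|_{L^{2/(1-\theta)}}\le\widehat C(1+\|\,|v|^{\gamma-1}\|_{L^{2/(1-\theta)}})=\widehat C(1+\|v\|_{L^{2(\gamma-1)/(1-\theta)}}^{\gamma-1})$, and one absorbs this into $\|v\|_1$ using the two-dimensional Sobolev embedding $\dot H^1\subset L^q$ for every finite $q$ (equivalently $\dot H^{1-\epsilon}\subset L^{2/\epsilon}$), which bounds $\|v\|_{L^{2(\gamma-1)/(1-\theta)}}\le\widehat C\|v\|_1$. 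Collecting terms yields $\|F(v)\|_\theta\le\widehat C(1+\|v\|_1^{\gamma-1})\|v\|_1\le\widehat C(1+\|v\|_1^\gamma)$.

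The main obstacle is the bookkeeping of Hölder exponents in the two-dimensional case and, relatedly, making sure every exponent lands in the admissible range of the Sobolev embeddings~\eqref{eq:sobolev} (for instance that $2/\theta\in(2,\infty)$ and that $2(\gamma-1)/(1-\theta)$ is finite, which it is for $\theta<1$). A secondary technical point, in both dimensions, is the justification of the chain rule $\nabla F(v)=f'(v)\nabla v$ for the Nemytskij operator $F$ on $\dot H^1$, together with the verification that $F(v)$ (or its gradient) lies in the space in which we are measuring it; this is routine given $f\in C^2$ with polynomially bounded derivatives and $v\in\dot H^1$, and can be handled by a density argument from smooth functions. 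Once these points are settled, both inequalities follow by combining the pointwise growth bounds from Assumption~\ref{assum:nonlinearity} with the embeddings, exactly as sketched.
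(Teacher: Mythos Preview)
Your proposal is correct and follows essentially the same route as the paper. For $d=1$ the arguments are identical. For $d=2$ the paper first records the abstract inequality $\|x\|_\theta\le\widehat C\,\|\Lambda^{1/2}x\|_{L^{2/(2-\theta)}}$ (obtained by the duality $\langle\Lambda^{\theta/2}x,y\rangle=\langle\Lambda^{1/2}x,\Lambda^{(\theta-1)/2}y\rangle$ together with the embedding $\dot H^{1-\theta}\subset L^{2/\theta}$) and then applies it to $x=F(v)$ with the H\"older splitting $\|F'(v)\nabla v\|_{L^{2/(2-\theta)}}\le\|F'(v)\|_{L^{2/(1-\theta)}}\|\nabla v\|$; your version performs the same duality and the same three-factor H\"older estimate but does so directly on $\langle f'(v)\nabla v,w\rangle$ with $\|w\|_{1-\theta}\le1$, arriving at the same exponents and the same final bound.
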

\begin{proof}
In dimension $d=1$, using properties of the space $\dot H^\theta$, the assumption on $F$ and
the Sobolev embedding $\dot{H}^1\subset V$, one has
\begin{equation*}
\begin{split}
 \| F (v) \|_1= \| \nabla  ( F(v) ) \|
              = \| F'(v) \nabla v\|
     \leq\widehat C \Big(  1 + \big\| v
        \big\|_{V}^{\gamma-1} \Big)
            \| v \|_1
          \leq\widehat C \big( 1 + \|v\|_1^{\gamma} \big).
\end{split}
\end{equation*}
In dimension $d=2$, the Sobolev embedding inequality~\eqref{eq:sobolev} and H\"older's inequality, yield
\begin{equation*}\label{eq:Lambda-theta}
\begin{split}
\|  x \|_{\theta} & =  \| \Lambda^{\frac \theta 2}  x  \|
  =\sup_{ \| y \| = 1 } \big| \langle \Lambda^{\frac \theta 2}  x ,
                                       y  \rangle \big|
  = \sup_{ \| y \| = 1 } \big|\langle \Lambda^{\frac 12}  x ,
                  \Lambda^{\frac {\theta-1}2}    y  \rangle \big|
 \\ & \leq\widehat C \sup_{ \| y \| = 1 }
  \| \Lambda^{\frac 1 2}  x \|_{L^{\frac 2 {2 - \theta} }}
  \| \Lambda^{\frac {\theta-1}2} y  \|_{L^{\frac 2 {\theta} }}
   \leq\widehat C \| \Lambda^{\frac 1 2} x \|_{L^{\frac 2 {2 - \theta} }}.
\end{split}
\end{equation*}
As a consequence, one obtains
\begin{equation}\label{eq:F-theta}
\begin{split}
\| F (v) \|_{\theta} &
     \leq\widehat C \| F'(v) \nabla v \|_{L^{\frac 2 {2 - \theta} }}
     \leq\widehat C \| F'(v) \|_{L^{\frac 2 {1 - \theta} }}
            \| \nabla v \|
     \\ & \leq\widehat C \Big(  1 + \big\| v
        \big\|_{L^{\frac {2(\gamma-1)}{1-\theta}}}^{\gamma-1} \Big)
            \| v \|_1
          \leq\widehat C \big( 1 + \|v\|_1^{\gamma} \big),
\end{split}
\end{equation}
where in the last inequality, we used $\dot{H}^1 \subset L^{\frac {2(\gamma-1)}{1-\theta}}$ for $d=2$.
This concludes the proof of the lemma.
\end{proof}

\begin{lemma}\label{lem;F(-1)}
Under Assumption \ref{assum:nonlinearity}, we have for $d=1$, that
\begin{equation}\label{eqn:F(-1)}
\| F'(\phi) \psi \|_{-1} \leq\widehat C \,
             ( 1 + \| \phi \|_1^{\gamma - 1} ) \| \psi \|_{-1}
\end{equation}
for $\phi \in \dot{H}^1$ and $\psi \in H$.
\end{lemma}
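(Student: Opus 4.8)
The plan is to use the duality characterisation of the $\dot{H}^{-1}$-norm. Since $\Lambda^{-\frac12}$ is self-adjoint on $U$, for any $w$ we have $\|w\|_{-1} = \|\Lambda^{-\frac12}w\| = \sup_{\|z\|=1}\langle w,\Lambda^{-\frac12}z\rangle$, and, interpreting $F'(\phi)$ as multiplication by the real-valued function $f'(\phi)$, this self-adjointness also gives, for $z\in U$ with $\|z\|=1$,
\begin{equation*}
\big\langle F'(\phi)\psi,\Lambda^{-\frac12}z\big\rangle
 = \big\langle \psi, f'(\phi)\Lambda^{-\frac12}z\big\rangle
 = \big\langle \Lambda^{-\frac12}\psi,\Lambda^{\frac12}\big(f'(\phi)\Lambda^{-\frac12}z\big)\big\rangle,
\end{equation*}
once one knows $f'(\phi)\Lambda^{-\frac12}z\in\dot{H}^1$. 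By Cauchy--Schwarz this yields
\begin{equation*}
\|F'(\phi)\psi\|_{-1}\le\|\psi\|_{-1}\sup_{\|z\|=1}\big\|f'(\phi)\Lambda^{-\frac12}z\big\|_{1},
\end{equation*}
so everything reduces to the deterministic bound $\|f'(\phi)\Lambda^{-\frac12}z\|_1\le\widehat C(1+\|\phi\|_1^{\gamma-1})$, uniformly over $\|z\|=1$.

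To obtain it, I would first note that in dimension $d=1$ the Sobolev embedding $\dot{H}^1\subset V$ gives $\|\phi\|_V\le\widehat C\|\phi\|_1$ and $\|\Lambda^{-\frac12}z\|_V\le\widehat C\|\Lambda^{-\frac12}z\|_1 = \widehat C\|z\| = \widehat C$; in particular $f'(\phi)$ and $f''(\phi)$ are bounded continuous functions. Then the chain and product rules in $H^1(\mathcal{D})$ (applicable since $\phi\in H^1(\mathcal{D})$ is essentially bounded, $f\in C^2$, and $\Lambda^{-\frac12}z\in H^1_0(\mathcal{D})\cap V$) give
\begin{equation*}
\nabla\big(f'(\phi)\Lambda^{-\frac12}z\big) = f''(\phi)\,\nabla\phi\,\Lambda^{-\frac12}z + f'(\phi)\,\nabla\Lambda^{-\frac12}z,
\end{equation*}
whence, using $\|\nabla\phi\| = \|\phi\|_1$, $\|\nabla\Lambda^{-\frac12}z\| = \|\Lambda^{-\frac12}z\|_1 = \|z\| = 1$, and the growth bounds $|f'|\le C_2(1+|\cdot|^{\gamma-1})$, $|f''|\le C_3(1+|\cdot|^{\gamma-2})$ of Assumption~\ref{assum:nonlinearity},
\begin{equation*}
\big\|f'(\phi)\Lambda^{-\frac12}z\big\|_1
 \le \|f''(\phi)\|_V\|\nabla\phi\|\|\Lambda^{-\frac12}z\|_V + \|f'(\phi)\|_V\|\nabla\Lambda^{-\frac12}z\|
 \le \widehat C\big(1+\|\phi\|_1^{\gamma-2}\big)\|\phi\|_1 + \widehat C\big(1+\|\phi\|_1^{\gamma-1}\big).
\end{equation*}
Since $\gamma\ge2$, the first summand is dominated by $\widehat C(1+\|\phi\|_1^{\gamma-1})$, and collecting the displays proves the lemma.

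The only genuinely delicate step is the functional-analytic bookkeeping in the first paragraph: one must verify that $f'(\phi)\Lambda^{-\frac12}z$ really lies in $\dot{H}^1 = H^1_0(\mathcal{D})$ — it is in $H^1(\mathcal{D})$ by the product rule above, and it vanishes on $\partial\mathcal{D}$ because $\Lambda^{-\frac12}z$ does while $f'(\phi)$ is continuous up to the boundary — which is exactly what legitimises the integration-by-parts identity $\langle\psi,f'(\phi)\Lambda^{-\frac12}z\rangle = \langle\Lambda^{-\frac12}\psi,\Lambda^{\frac12}(f'(\phi)\Lambda^{-\frac12}z)\rangle$ when $\psi$ is only controlled in $\dot{H}^{-1}$. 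The remaining estimates are routine consequences of the growth assumptions on $f',f''$ and the one-dimensional Sobolev embedding, where no difficulty is expected.
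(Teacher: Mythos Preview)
Your proof is correct and follows essentially the same route as the paper: duality via $\|\cdot\|_{-1}=\sup_{\|z\|=1}\langle\,\cdot\,,\Lambda^{-\frac12}z\rangle$, the self-adjointness of multiplication by $f'(\phi)$ to pass the operator across, and then the $\dot H^1$-bound on $f'(\phi)\Lambda^{-\frac12}z$ via the product/chain rule together with the one-dimensional embedding $\dot H^1\subset V$. The paper does the same, merely packaging the preliminary estimate as $\|\nabla(F'(\phi)\varphi)\|\le\widehat C(1+\|\phi\|_1^{\gamma-1})\|\varphi\|_1$ for general $\varphi\in\dot H^1$ before specialising to $\varphi=\Lambda^{-\frac12}\xi$; your additional remarks on boundary values and the applicability of the product rule are a welcome clarification of a step the paper leaves implicit.
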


\begin{proof}
Let us start the proof with a preliminary step.
Using the assumption on the nonlinearity and applying the Sobolev embedding inequality $\dot{H}^1 \subset V$ yield
\begin{equation*}
\begin{split}
\| \nabla\left( F' (\phi) \varphi \right)\|^2 & =
    \int_{\mathcal{D}} \Big| \tfrac{\partial}{\partial x}
         \big( f'(\phi(x)) \varphi(x) \big) \Big|^2 \dd x
    \\  & \leq 2 \int_{\mathcal{D}} \Big(
          \big| f''(\phi(x)) \phi'(x) \varphi(x) \big|^2 +
          \big| f'(\phi(x)) \varphi'(x) \big|^2 \Big) \dd x
    \\  & \leq\widehat C\big ( 1 + \| \phi \|_V^{\gamma-2} \big)^2
                           \| \varphi \|_V^2 \| \phi\|_1^2
         +\widehat C\big ( 1 + \| \phi \|_V^{\gamma-1} \big)^2 \| \varphi \|_1^2
    \\  & \leq\widehat C \, \Big( 1 + \|\phi\|_1^{\gamma-1} \Big)^2 \| \varphi \|_1^2
\end{split}
\end{equation*}
for $\phi\in\dot{H}^1$ and $\varphi\in\dot{H}^1$.

To get equation~\eqref{eqn:F(-1)}, we note that
\begin{equation*}
\begin{split}
\| \Lambda^{-\frac12} F'( \phi ) \psi \|
   & = \sup_{\| \xi \| \leq 1}
      \Big| \langle \Lambda^{-\frac12} F'( \phi ) \psi, \xi \rangle \Big|
     = \sup_{\| \xi \| \leq 1}
      \Big| \langle \Lambda^{-\frac12} \psi,
             \Lambda^{\frac12} F'( \phi ) \Lambda^{-\frac12} \xi \rangle \Big|
   \\ & \leq \| \psi \|_{-1} \cdot \sup_{\| \xi \| \leq 1}
                \| F' ( \phi ) \Lambda^{-\frac12} \xi \|_1
        \leq\widehat C \,\left( 1 + \| \phi \|_1^{\gamma-1} \right) \| \psi \|_{-1},
\end{split}
\end{equation*}
where the Cauchy--Schwarz inequality and the self-adjointness of $F' ( \phi )$ and $\Lambda^{-\frac12}$ were used.
\end{proof}

\section{Strong convergence rates of the spatial discretization}
\label{sec:spatial-analysis}

In this section, we analyze the strong approximation of the spatial discretization of the SPDE~\eqref{eq:abstract-SPDE} by a spectral Galerkin method.

It should be noted that essential difficulties exist when analyzing a finite element method in space for the considered SPDE. Indeed,
the dissipative property of the nonlinear mapping $P_h F$, where $P_h$ is the orthogonal projection, does not hold in the $\mathbb{H}^2$-norm.
This problem does not arise with the spectral Galerkin approximation that we now analyze.

To begin with, we consider a positive integer $N$ and define the finite dimensional subspace $U_N$
of $U=L^2 ( \mathcal{D} ; \R )$ by
$U_N := \text{span} \{ e_1, e_2, \cdots, e_N \}$, where we recall that $(e_j)_{j=1}^N$ are
the eigenfunctions of $\Lambda$. We next define the projection operator
$P_N: \dot{H}^{\alpha} \rightarrow U_N$ by
\begin{equation*}
P_N  \psi = \sum_{i=1}^N \langle \psi, e_i \rangle e_i,
\quad \forall ~ \psi \in \dot{H}^{\alpha}, \alpha \geq -1.
\end{equation*}
Moreover, the projection operator on $\H^{\beta}$, still denoted by $P_N$ for convenience, is given by \begin{equation*}
P_N X = [P_N x_1, P_N x_2]^T \,\,\,\, \mathrm{for} \,\,\,
X=[x_1,x_2]^T \in \H^{\beta},\beta \geq 0.
\end{equation*}
Then, one can immediately verify that
\begin{equation}\label{eq:P_N-estimate}
 \| P_N  \psi \| \leq \| \psi \|,
 \quad
 \| (I - P_N)  \psi \| \leq
    \lambda_N ^{-\frac \kappa 2} \| \psi \|_{\kappa},
    \quad \text{for} \quad \psi \in \dot{H}^{\kappa}, \kappa \geq 0.
\end{equation}
The discrete Laplacian $\Lambda_N: U_N \rightarrow U_N $ is defined by
\begin{equation*}
\Lambda_N \psi =  \Lambda P_N \psi = P_N \Lambda \psi
   = \sum_{i=1}^N \lambda_i \langle \psi, e_i \rangle e_i,
   \quad \forall~ \psi \in U_N.
\end{equation*}
Therefore, applying the spectral Galerkin method to the SPDE~\eqref{eq:abstract-SPDE} yields
the finite dimensional problem
\begin{equation}\label{eq:spectral-galerkin}
    \dd X^N(t) = A_N X^N(t) \, \dd t
       + {\bf F}_N (X^N(t)) \, \dd t   + B_N \dd W^Q(t)
\end{equation}
with the initial value $X^N(0)=\left[
\begin{array}{ccc}
     P_N u_0 \\
     P_N v_0
 \end{array}
\right]$. Here, we denote $F_N := P_N F$ and
\begin{equation*}
X^N =X^N(t) = \left[
 \begin{array}{ccc}
     u^N(t) \\ v^N(t)
 \end{array}
 \right]
,
\:
 A_N = \left[
 \begin{array}{ccc}
     0 & I \\
     -\Lambda_N & 0
 \end{array}
 \right]
,
\:
{\bf F}_N(X^N) = \left[
 \begin{array}{ccc}
     0 \\
     F_N (v^N)
 \end{array}
 \right]
,
\:
B_N = \left[
 \begin{array}{ccc}
     0 \\ P_N
 \end{array}
 \right].
\end{equation*}
Analogously to the continuous setting, the operator $A_N$ generates a strongly continuous semigroup $E_N(t)$ for $t \geq 0$ on $U_N \times U_N$, given by
\begin{equation*}
E_N(t)= e^{t A_N} =
\left[
 \begin{array}{ccc}
     C_N(t) &
     \Lambda_N^{-\frac12} S_N(t) \\
     -\Lambda_N^{\frac12} S_N(t) &
     C_N(t)
 \end{array}
  \right].
\end{equation*}
Obviously, the discrete cosine and sine operators
$C_N(t):= \cos( t \Lambda_N^{\frac12})$  and
$S_N(t):= \sin( t \Lambda_N^{\frac12})$ satisfy for $\psi \in \dot{H}^{\alpha},\alpha \geq -1$,
\begin{equation*}
C_N(t) P_N \psi = C(t) P_N \psi = P_N C(t) \psi, \,
S_N(t) P_N \psi = S(t) P_N \psi = P_N S(t) \psi.
\end{equation*}
Similarly, the mild solution of the semi-discrete problem~\eqref{eq:spectral-galerkin} reads
\begin{equation}\label{eq:mild-space}
X^N(t) = E_N(t) X^N(0) + \int_0^t E_N(t-s) {\bf F}_N(X^N(s)) \, \dd s
                 + \int_0^t E_N(t-s) B_N \, \dd W^Q(s).
\end{equation}

The following lemma, concerning the spatio-temporal regularity of $X^N(t)$, is crucial in the presented strong convergence analysis.
\begin{lemma}\label{lem:XN-regularity}
Under Assumptions~\ref{assum:nonlinearity} and~\ref{assum:noise-term} and assuming that
$ \| X_0 \|_{L^p(\Omega; \H^2)} < \infty$ for some $p \geq 2$, it holds that
\begin{equation}\label{eq:space-regularity-Z^N}
\sup_{t \in [0,T]} \| X^N(t) \|_{L^p(\Omega;\H^2)} < \infty.
 \end{equation}
In addition, for $0 \leq s \leq t \leq T$ and $\eta \in [1,2)$, it holds that
\begin{equation}\label{eq:time-continuity-Z^N}
 \| X^N(t) - X^N(s) \|_{L^p(\Omega;\H^\eta)}
  \leq \widehat{C}\big( t - s \big)^{\min\{2-\eta,\frac12\}}.
\end{equation}
\end{lemma}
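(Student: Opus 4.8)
The plan is to mimic the regularity proof for the continuous problem (Theorem~\ref{thm:wellposedness-regularity-Z}) in the finite-dimensional setting, and then to exploit the mild formulation~\eqref{eq:mild-space} to upgrade spatial regularity to H\"older-in-time regularity. For the spatial regularity bound~\eqref{eq:space-regularity-Z^N}, I would apply It\^o's formula to $\| X^N(t) \|_{\H^2}^p$. The drift contribution splits into the term $\langle X^N(s), A_N X^N(s) \rangle_{\H^2}$, which vanishes by the skew-symmetry of $A_N$ on $U_N\times U_N$ (exactly as $\langle X(s), A X(s)\rangle_{\H^2}=0$ in the continuous case), and the nonlinear term $\langle X^N(s), \mathbf F_N(X^N(s))\rangle_{\H^2} = \langle v^N(s), P_N F(v^N(s))\rangle_{\dot H^1} = \langle \nabla v^N(s), F'(v^N(s))\nabla v^N(s)\rangle$, where the crucial point is that $P_N$ is a $\dot H^1$-orthogonal projection (since it commutes with $\Lambda$) so that $\langle v^N, P_N F(v^N)\rangle_{\dot H^1} = \langle v^N, F(v^N)\rangle_{\dot H^1}$; the one-sided Lipschitz bound $f'(v)\le C_1$ then gives $\langle \nabla v^N, F'(v^N)\nabla v^N\rangle \le C_1\|v^N\|_1^2 \le C_1\|X^N\|_{\H^2}^2$. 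The stochastic integral term has mean zero, and the It\^o-correction (quadratic variation) terms are controlled by $\|\Lambda^{1/2}Q^{1/2}\|_{\mathcal L_2}^2$ via Assumption~\ref{assum:noise-term}, using $\|P_N\|_{\mathcal L(U)}\le 1$. Combining these with $\|X^N(s)\|_{\H^2}^{p-2}\le \|X^N(s)\|_{\H^2}^p+1$ and the moment bound on the projected initial datum $\|X^N(0)\|_{L^p(\Omega;\H^2)}\le \|X_0\|_{L^p(\Omega;\H^2)}<\infty$ yields a Gronwall inequality, with a constant uniform in $N$.

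For the temporal regularity~\eqref{eq:time-continuity-Z^N}, I would use the mild formula~\eqref{eq:mild-space} and write $X^N(t)-X^N(s)$ as the sum of three differences: $(E_N(t-s)-I)E_N(s)X^N(0)$ from the initial datum; the difference of the two Duhamel integrals $\int_0^t E_N(t-r)\mathbf F_N(X^N(r))\,\dd r - \int_0^s E_N(s-r)\mathbf F_N(X^N(r))\,\dd r$; and the analogous difference of stochastic convolutions. For the first term, Lemma~\ref{lem:operator-estimate} (estimate~\eqref{eq:semigroup}, with the discrete operators satisfying the same bounds, uniformly in $N$) together with $\|X^N(0)\|_{\H^2}\le\|X_0\|_{\H^2}$ gives a factor $(t-s)^{2-\eta}$. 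For the deterministic convolution I would split it as $\int_s^t E_N(t-r)\mathbf F_N(X^N(r))\,\dd r + \int_0^s (E_N(t-r)-E_N(s-r))\mathbf F_N(X^N(r))\,\dd r$; the first piece is bounded by $(t-s)\sup_r\|\mathbf F_N(X^N(r))\|_{\H^\eta}$, and the second by $(t-s)^{2-\eta}\int_0^s\|\mathbf F_N(X^N(r))\|_{\H^{\eta+ (2-\eta)}}\dots$ — here one uses $\|E_N(t-r)-E_N(s-r)\|$ acting on the extra regularity. To estimate $\|\mathbf F_N(X^N(r))\|_{\H^\eta}$ in the needed norms one invokes Lemma~\ref{lem:F-theta} (which bounds $\|F(v)\|_1$, and hence $\|P_NF(v)\|_1$, by $\widehat C(1+\|v\|_1^\gamma)$) combined with the $\H^2$-moment bound~\eqref{eq:space-regularity-Z^N} just established, which controls $\|v^N(r)\|_1$ in every $L^p(\Omega)$. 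For the stochastic convolution, I would use the It\^o isometry to reduce to $\int_s^t \|E_N(t-r)B_N\|_{\mathcal L_2^0}^2\dd r + \int_0^s\|(E_N(t-r)-E_N(s-r))B_N\|_{\mathcal L_2^0}^2\dd r$; the first integral is $\le (t-s)\|\Lambda^{1/2}Q^{1/2}\|_{\mathcal L_2}^2$ and the second, using~\eqref{eq:cosine-sine} with exponent $\gamma$ chosen to match $\eta$ (giving $(t-r)^{2-\eta}$-type kernels that are square-integrable since $2(2-\eta)>-1$ for $\eta<2$), contributes $(t-s)^{2(2-\eta)}\wedge(t-s)^1$ after integration — this is why the rate saturates at $\min\{2-\eta,\tfrac12\}$, the $\tfrac12$ coming from the stochastic convolution when $\eta$ is close to $1$. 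Finally, for $p>2$ one upgrades the stochastic-convolution estimate from second moments to $p$-th moments via the Burkholder--Davis--Gundy inequality.

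The main obstacle I anticipate is the second moment estimate of the stochastic convolution difference: one must choose the H\"older exponent $\gamma\in[0,1]$ in Lemma~\ref{lem:operator-estimate} carefully as a function of $\eta$, check that the resulting singular kernel $(t-r)^{\gamma-1}$-type expression is square-integrable over $[0,s]$, and verify that the two competing rates — $(2-\eta)$ from the deterministic parts and $\tfrac12$ from the noise — combine to give exactly $\min\{2-\eta,\tfrac12\}$, uniformly in $N$. The bookkeeping of which intermediate Sobolev norm of $X^N$ is needed (always at most $\H^2$, which is why~\eqref{eq:space-regularity-Z^N} must be proved first) and the verification that all constants are $N$-independent (which rests on $\|P_N\|_{\mathcal L(U)}\le1$ and the commutation of $P_N$ with $\Lambda$, $C(t)$, $S(t)$) are the remaining technical points, but these are routine once the structure above is in place.
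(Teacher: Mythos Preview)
Your argument for the spatial bound \eqref{eq:space-regularity-Z^N} matches the paper's exactly (the paper simply refers back to the It\^o-formula computation of Theorem~\ref{thm:wellposedness-regularity-Z}).

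For the temporal regularity~\eqref{eq:time-continuity-Z^N} your route differs from the paper's, and the difference is not just cosmetic. The paper does \emph{not} split the convolutions over $[0,s]$ and $[s,t]$; instead it restarts the mild formula at time $s$, writing
\[
X^N(t)-X^N(s) = \bigl(E_N(t-s)-I\bigr)X^N(s) + \int_s^t E_N(t-r)\mathbf F_N(X^N(r))\,\dd r + \int_s^t E_N(t-r)B_N\,\dd W^Q(r),
\]
so there are only three terms and no $\int_0^s(\cdots)\,\dd r$ pieces at all. The first term gives $(t-s)^{2-\eta}$ via Lemma~\ref{lem:operator-estimate} and the $\H^2$-moment bound on $X^N(s)$ just established; the drift integral gives $(t-s)$ using only $\|F(v^N)\|_{\eta-1}$ with $\eta-1\in[0,1)$; the stochastic integral gives $(t-s)^{1/2}$ by BDG.

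This distinction exposes a gap in your plan. For the piece $\int_0^s\bigl(E_N(t-r)-E_N(s-r)\bigr)\mathbf F_N(X^N(r))\,\dd r$ you say you need $\|\mathbf F_N(X^N(r))\|_{\H^{\eta+(2-\eta)}}=\|\mathbf F_N(X^N(r))\|_{\H^2}=\|F(v^N(r))\|_1$, and you appeal to Lemma~\ref{lem:F-theta} for this. But Lemma~\ref{lem:F-theta} only gives $\|F(v)\|_1\le \widehat C(1+\|v\|_1^\gamma)$ in dimension $d=1$; in $d=2$ it bounds $\|F(v)\|_\theta$ only for $\theta<1$, precisely because $\dot H^1\not\subset V$. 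So as written your argument breaks in $d=2$. The paper's restart-at-$s$ decomposition sidesteps this entirely: all the $[0,s]$ history is packaged into $X^N(s)$ itself, whose $\H^2$ moments you already control, and the surviving drift integral needs only $\|F(v^N)\|_{\eta-1}$, which Lemma~\ref{lem:F-theta} covers in both dimensions. (Your approach can be repaired in $d=2$, e.g.\ by rewriting $\int_0^s E_N(s-r)\mathbf F_N\,\dd r$ back in terms of $X^N(s)$ via the mild equation, but that is essentially reinventing the paper's decomposition.)

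A smaller remark: your description of the stochastic $\int_0^s$ piece is muddled. Applying Lemma~\ref{lem:operator-estimate} to $S(t-r)-S(s-r)$ yields a factor $(t-s)^{\gamma_0}$, \emph{constant} in $r$, not a singular $(t-r)^{\gamma_0-1}$-type kernel; there is no integrability issue to check. In the paper this term does not even appear, for the reason above.
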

\begin{proof}
The proof for the spatial regularity of the process $X^N(t)$ can be shown similarly to the proof of Theorem~\ref{thm:wellposedness-regularity-Z}. This is thus omitted. To prove the temporal H\"{o}lder regularity, we use the mild form of the semi-discrete solution~\eqref{eq:mild-space} and obtain
\begin{equation*}
\begin{split}
X^N(t) - X^N(s) &= \big( E_N(t-s) - I \big) X^N(s)
     \\& \quad + \int_s^t E_N(t-r) {\bf F}_N (X^N(r)) \,\dd r
     + \int_s^t E_N(t-r) B_N \,\dd W^Q(r).
\end{split}
\end{equation*}
Thus, by the triangle inequality, we get
\begin{equation*}
\begin{split}
\| X^N(t) -  X^N(s) \|_{L^p(\Omega; \H^{\eta})}
  & \leq \| ( E_N(t-s) - I ) X^N(s) \|_{L^p(\Omega; \H^{\eta})}
  \\& \quad + \Big\| \int_s^t E_N(t-r) {\bf F}_N(X^N(r)) \,\dd r
                          \Big\|_{L^p(\Omega; \H^{\eta})}
  \\& \quad + \Big\| \int_s^t E_N(t-r) B_N \,\dd W^Q(r)
                          \Big\|_{L^p(\Omega; \H^{\eta})}.
\end{split}
\end{equation*}
We now treat each of the above three terms separately.
The first term can be directly estimated by \eqref{eq:semigroup}
and \eqref{eq:space-regularity-Z^N} in order to get the estimates
\begin{equation*}
\big\| ( E_N(t-s) - I ) X^N(s) \big\|_{L^p(\Omega; \H^{\eta})}
\leq \widehat{C} \big( t - s \big)^{ 2- \eta}
          \| X^N(s) \|_{L^p(\Omega; \H^2)}
\leq \widehat{C} \big( t - s \big)^{ 2- \eta}.
\end{equation*}
For the second term, it follows from the stability of the semigroup, the bound~\eqref{eq:F-theta} with $\theta = \eta -1$, and the relation~\eqref{eq:space-regularity-Z^N} that
\begin{equation*}
\begin{split}
\Big\| \int_s^t & E_N(t-r) {\bf F}_N (X^N(r))
\,\dd r   \Big\|_{L^p(\Omega; \H^{\eta})}
\\ & \leq  \int_s^t \big\| E_N(t-r) {\bf F}_N (X^N(r)) \big\|_{L^p(\Omega; \H^{\eta})} \,\dd r
\\ & \leq \widehat{C}  \int_s^t \big\| {\bf F}_N (X^N(r)) \big\|_{L^p(\Omega; \H^{\eta})} \,\dd r
\\ & \leq  \widehat{C}
     \int_s^t \|  F(v^N(r)) \|_{L^p(\Omega; \dot{H}^{\eta-1})} \,\dd r
\\ & \leq  \widehat{C}
     \int_s^t \big( 1 +  \|  v^N(r)
      \|_{L^{\gamma p}(\Omega; \dot{H}^{1})}^{\gamma} \big)  \,\dd r
\\ & \leq \widehat{C} \big( t - s \big).
\end{split}
\end{equation*}
Finally, using the Burkholder--Davis--Gundy inequality, the stability of the sine and cosine operators as well as of the projection operator,
and the assumption~\eqref{eq:ass-AQ-condition} on the noise, we obtain
\begin{equation*}
\begin{split}
\Big\| & \int_s^t E_N(t-r) B_N \dd W^Q(r)
           \Big\|_{L^p(\Omega; \H^{\eta})}
\\ & \leq \left(
  \int_s^t
  \big\| S(t-r) \Lambda^{\frac{\eta-1}2}  \big\|_{\mathcal{L}_2^0}^2 +
  \big\|
 C(t-r) \Lambda^{\frac{\eta-1}2} \big\|_{\mathcal{L}_2^0}^2 \,\dd r
  \right)^{\frac12}
\\& \leq
\widehat{C} \,
\left( \int_s^t \big\|
  \Lambda^{\frac{\eta-2}2} \Lambda^{\frac12} Q^{\frac12}
  \big\|_{\mathcal{L}_2}^2 \, \dd r
  \right)^{\frac12}
\\& \leq \widehat{C} \big( t - s \big)^{\frac12}.
\end{split}
\end{equation*}
It remains to collect all the above estimates to conclude the proof.
\end{proof}

The following theorem gives the strong error, in the $\H^1$-norm, of the spatial approximation
of the stochastic wave equation~\eqref{eq:abstract-SPDE} by the spectral Galerkin method.

\begin{theorem}[Strong convergence rates of the spatial semi-discretization]\label{thm:space}
Let $X(t)$, resp. $X^N(t)$, denote the mild solution~\eqref{eq:mild-abstract} of the considered stochastic wave
equation with nonlinear damping, resp. the mild solution~\eqref{eq:mild-space} of its spectral Galerkin approximation.
Under Assumptions~\ref{assum:nonlinearity} and~\ref{assum:noise-term} and assuming that
$\|X_0\|_{L^p(\Omega;\H^2)} < \infty$ for some $p \ge 2$, we have for dimension $d=1$, the error estimate
\begin{equation*}
\big\|  X(t) - X^N(t) \big\|_{L^2( \Omega;\H^1 )}
\leq\widehat C \lambda_N^{-\frac12}.
\end{equation*}
For dimension $d=2$, for any sufficiently small parameter $\epsilon > 0$, the error estimate reads
\begin{equation*}
\big\|  X(t) - X^N(t) \big\|_{L^2( \Omega;\H^1 )}
\leq\widehat C \lambda_N^{-\frac12+\epsilon}.
\end{equation*}
\end{theorem}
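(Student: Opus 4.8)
The plan is to carry out the two-step argument announced in the introduction. First I would introduce the auxiliary process $\widetilde X^N$ of \eqref{eq:auxiliary-process-Galerkin}, i.e.\ the solution of the spectral Galerkin system \eqref{eq:spectral-galerkin} driven by the \emph{same} noise but with the nonlinearity evaluated along the exact solution $X$; comparing mild forms, this process is exactly the projected mild solution, $\widetilde X^N(t)=P_N X(t)=E_N(t)P_N X_0+\int_0^t E_N(t-s)P_N{\bf F}(X(s))\,\dd s+\int_0^t E_N(t-s)B_N\,\dd W^Q(s)$. Then one splits $\|X(t)-X^N(t)\|_{L^2(\Omega;\H^1)}\le Err_1+Err_2$ with $Err_1:=\|X(t)-\widetilde X^N(t)\|_{L^2(\Omega;\H^1)}=\|(I-P_N)X(t)\|_{L^2(\Omega;\H^1)}$ and $Err_2:=\|\widetilde X^N(t)-X^N(t)\|_{L^2(\Omega;\H^1)}$. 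The term $Err_1$ is handled by a routine projection estimate: commuting $P_N$ with $\Lambda^{1/2}$ and applying \eqref{eq:P_N-estimate} to each component gives $\|(I-P_N)X(t)\|_{\H^1}\le\lambda_N^{-1/2}\|X(t)\|_{\H^2}$, so the spatial regularity bound \eqref{eq:moment-bounds-Z(t)} of Theorem~\ref{thm:wellposedness-regularity-Z} yields $Err_1\le\widehat C\lambda_N^{-1/2}$ in both dimensions; note that this term carries no order reduction.

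The core of the proof is the estimate of $Err_2$. Set $\rho^N:=\widetilde X^N-X^N=P_N X-X^N$. Since the initial data and the stochastic convolutions of $\widetilde X^N$ and $X^N$ coincide, they cancel in the difference, so $\rho^N$ is the \emph{pathwise} solution, in the finite-dimensional space $U_N\times U_N$, of the random ODE $\dot\rho^N(t)=A_N\rho^N(t)+\big(P_N{\bf F}(X(t))-{\bf F}_N(X^N(t))\big)$ with $\rho^N(0)=0$. Pairing with $\rho^N$ in $\H^1$, the skew structure of $A_N$ gives the discrete energy identity $\langle\rho^N,A_N\rho^N\rangle_{\H^1}=0$; moreover, writing $\rho^N=[\rho_1^N,\rho_2^N]^T$ and using $P_N{\bf F}(X)-{\bf F}_N(X^N)=[0,P_N(F(v)-F(v^N))]^T$, one obtains $\tfrac12\tfrac{\dd}{\dd t}\|\rho^N(t)\|_{\H^1}^2=\langle\rho_2^N,F(v)-F(v^N)\rangle$. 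The key trick is then to split $F(v)-F(v^N)=\big(F(v)-F(P_N v)\big)+\big(F(P_N v)-F(v^N)\big)$. For the second bracket the one-sided Lipschitz bound $f'\le C_1$ from Assumption~\ref{assum:nonlinearity} applies, since $P_N v-v^N=\rho_2^N$, and gives $\langle\rho_2^N,F(P_N v)-F(v^N)\rangle\le C_1\|\rho_2^N\|^2\le C_1\|\rho^N\|_{\H^1}^2$; this is where global monotonicity enters, and it has the pleasant feature that the numerical solution $v^N$ appears only in a term controlled by a pure multiple of $\|\rho^N\|_{\H^1}^2$. For the first bracket, I would use the mean value theorem and the polynomial growth $|f'|\le C_2(1+|\cdot|^{\gamma-1})$ to bound $\|F(v)-F(P_N v)\|$ by a polynomial factor times $\|(I-P_N)v\|$ in a suitable Lebesgue norm, followed by Young's inequality to absorb $\|\rho_2^N\|$.

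It is precisely this last estimate that splits into the two dimensions, and the $d=2$ case is the main obstacle. For $d=1$ the Sobolev embedding $\dot H^1\subset V$ allows one to take out the supremum norm, giving $\|F(v)-F(P_N v)\|\le\widehat C(1+\|v\|_1^{\gamma-1})\|(I-P_N)v\|\le\widehat C\lambda_N^{-1/2}(1+\|v\|_1^{\gamma-1})\|v\|_1$ by \eqref{eq:P_N-estimate}. For $d=2$ this embedding fails, so I would instead apply H\"older's inequality with conjugate exponents $(q,q')$, $q'$ close to $1$, together with the Sobolev embeddings \eqref{eq:sobolev}: $\|F(v)-F(P_N v)\|\le\widehat C\big(1+\|v\|_{L^{2(\gamma-1)q}}^{\gamma-1}\big)\|(I-P_N)v\|_{L^{2q'}}$, then $\|(I-P_N)v\|_{L^{2q'}}\le\widehat C\|(I-P_N)v\|_{1-1/q'}\le\widehat C\lambda_N^{-1/(2q')}\|v\|_1$ and $\|v\|_{L^{2(\gamma-1)q}}\le\widehat C\|v\|_1$; taking $q'=1/(1-2\epsilon)$ turns $\lambda_N^{-1/2}$ into $\lambda_N^{-1/2+\epsilon}$, which is the announced infinitesimal order reduction. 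In either case, after Young's inequality one reaches $\tfrac{\dd}{\dd t}\|\rho^N(t)\|_{\H^1}^2\le\widehat C\|\rho^N(t)\|_{\H^1}^2+\widehat C\lambda_N^{-1+2\epsilon}\big(1+\|v(t)\|_1^{2\gamma}\big)$, with $\epsilon=0$ when $d=1$.

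To finish, I would integrate this inequality in time, take expectations, and use $\sup_{t\in[0,T]}\E[\|v(t)\|_1^{2\gamma}]\le\sup_{t\in[0,T]}\|X(t)\|_{L^{2\gamma}(\Omega;\H^2)}^{2\gamma}<\infty$, which follows from \eqref{eq:moment-bounds-Z(t)} provided $p\ge2\gamma$ (or, more robustly, from the exponential integrability of Corollary~\ref{cor:exponential-integrability}); a final application of Gronwall's lemma then yields $\sup_{t\in[0,T]}\E[\|\rho^N(t)\|_{\H^1}^2]\le\widehat C\lambda_N^{-1+2\epsilon}$, i.e.\ $Err_2\le\widehat C\lambda_N^{-1/2+\epsilon}$. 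Combined with the bound for $Err_1$, this gives the claimed rate $\lambda_N^{-1/2}$ for $d=1$ ($\epsilon=0$) and $\lambda_N^{-1/2+\epsilon}$ for $d=2$. The delicate point of the whole argument is the $d=2$ nonlinear estimate above, where the loss of the embedding $\dot H^1\subset V$ forces the H\"older/Sobolev interpolation and the attendant arbitrarily small loss in the convergence order; everything else is a fairly standard energy/Gronwall scheme once the monotonicity-based splitting has been chosen.
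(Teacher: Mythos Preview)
Your proposal is correct and follows the same overall strategy as the paper: introduce the auxiliary process $\widetilde X^N$, split the error into $Err_1+Err_2$, and for $Err_2$ run an $\H^1$-energy argument with the monotonicity splitting $F(v)-F(v^N)=\big(F(v)-F(\widetilde v^N)\big)+\big(F(\widetilde v^N)-F(v^N)\big)$ followed by Gronwall.

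There are two minor differences worth noting. First, you observe that $\widetilde X^N=P_NX$ (which is indeed true since $P_N$ commutes with $A$ and the semigroup), and this collapses $Err_1$ to the one-line projection estimate $\|(I-P_N)X(t)\|_{\H^1}\le\lambda_N^{-1/2}\|X(t)\|_{\H^2}$; the paper does not make this identification and instead decomposes $Err_1$ into three mild-form pieces $I_1,I_2,I_3$, handling the nonlinear piece $I_2$ by rewriting it via the mild formula of $X$. Your route is shorter and also makes the regularity of $\widetilde X^N$ in $\H^2$ (which the paper proves separately) immediate. Second, for $d=2$ you place the higher Lebesgue exponent on $(I-P_N)v$ and then use $\dot H^{2\epsilon}\subset L^{2/(1-2\epsilon)}$ together with the projection bound, whereas the paper instead writes $\|P_N\psi\|=\|\Lambda^{\epsilon}P_N\Lambda^{-\epsilon}\psi\|\le\lambda_N^{\epsilon}\|\psi\|_{L^{2/(1+2\epsilon)}}$ and keeps $\|v-\widetilde v^N\|$ in $L^2$; both produce the same $\lambda_N^{-1/2+\epsilon}$ loss. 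Finally, the implicit need for higher moments (your $p\ge 2\gamma$) is present in the paper's proof as well, through its final H\"older step against the polynomial weight $(1+\|v\|_1^{2(\gamma-1)}+\|\widetilde v^N\|_1^{2(\gamma-1)})$.
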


\begin{proof}
To carry out the convergence analysis of the spectral Galerkin method,
we first define an auxiliary process $\widetilde{X}^N(t)$ satisfying
\begin{equation}\label{eq:auxiliary-process-Galerkin}
\dd \widetilde{X}^N(t)
    = A_N \widetilde{X}^N(t) \, \dd t
       + {\bf F}_N (X(t))\, \dd t   +  B_N  \dd W^Q(t),
       \quad
       \widetilde{X}^N(0)={X}^N(0),
\end{equation}
which can be regarded as a linear SPDE and its solution is given by
\begin{equation*}
    \widetilde{X}^N(t)
    =
    E_N(t) X^N(0)
    +
    \int_0^t  E_N(t-s) {\bf F}(X(s))\, \dd s
    +
    \int_0^t   E_N(t-s) B_N\,  \dd W^Q(s).
\end{equation*}
We can then split the error of the spatial approximation into two terms:
\begin{equation*}
\| X(t) - X^N(t) \|_{L^2(\Omega;\H^1)}
\leq
    \| X(t) - \widetilde{X}^N(t) \|_{L^2(\Omega;\H^1)}
   +  \| \widetilde{X}^N(t) - X^N(t) \|_{L^2(\Omega;\H^1)}.
\end{equation*}
For the first error term, we further divide it into three terms using the triangle inequality and the fact that $F_N=P_N F$.
For any $p\geq2$, we then obtain
\begin{equation*}
\begin{split}
\|  X(t) -\widetilde{X}^N(t) \|_{L^p(\Omega;\H^1)}
& \leq
      \| E(t) X(0) - E_N(t) X^N(0)\|_{L^p(\Omega;\H^1)}
\\& \quad
 + \Big\|  \int_0^t \left( E(t-s) - E_N(t-s) \right) {\bf F}(X(s))\, \dd s \Big\|_{L^p(\Omega;\H^1)}
\\& \quad
 + \Big\| \int_0^t  \left( E(t-s) B  - E_N(t-s) B_N \right)\,  \dd W^Q(s)
  \Big\|_{L^p(\Omega;\H^1)}
\\& =: I_1 + I_2 + I_3.
\end{split}
\end{equation*}
We now estimate the above three terms. For the first one, owing to the properties
of the projection operator, see equation~\eqref{eq:P_N-estimate}, and
using that $\|X(0)\|_{L^p(\Omega;\H^2)} < \infty$, one gets the estimate
\begin{equation*}
I_1 \leq\widehat C \, \lambda_N^{-\frac12}  \|X(0)\|_{L^p(\Omega;\H^2)}
    \leq\widehat C \, \lambda_N^{-\frac12}.
\end{equation*}
For the term $I_2$, we use again \eqref{eq:P_N-estimate}, the moment bound for the mild solution given in
\eqref{eq:moment-bounds-Z(t)}, and the assumption
\eqref{eq:ass-AQ-condition} on the noise to show
\begin{equation*}
\begin{split}
I_2 & = \Big\| ( I - P_N )  \int_0^t  E(t-s) {\bf F} (X(s))\, \dd s
                  \Big\|_{L^p(\Omega;\H^1)}
   \\ & = \Big\| ( I - P_N )
   \Big( X(t) - E(t) X(0) - \int_0^t E(t-s) B\, \dd W^Q(s) \Big)
                  \Big\|_{L^p(\Omega;\H^1)}
   \\ & \leq\widehat C \, \lambda_N^{-\frac12}
         \Big( \| X(t) \|_{L^p(\Omega;\H^2)}
         + \| X(0) \|_{L^p(\Omega;\H^2)}
         + \| \Lambda^{\frac 12} Q^{\frac12} \|_{\mathcal{L}_2}\Big)
   \\ & \leq\widehat C \, \lambda_N^{-\frac12}.
\end{split}
\end{equation*}
For the term $I_3$, the It\^{o} isometry, stability properties of the operators $C(t), S(t)$,
\eqref{eq:P_N-estimate} and
\eqref{eq:ass-AQ-condition} provide us with the estimate
\begin{equation*}
\begin{split}
 I_3 & \leq  \left(
   \int_0^t \left(\big\| S(t-s) - S_N(t-s) \big\|_{\mathcal{L}_2^0}^2
      + \big\| C(t-s) - C_N(t-s) \big\|_{\mathcal{L}_2^0}^2\right) \,\dd s
  \right)^{\frac12}
  \\& \leq \left( \int_0^t \big\|
       (I - P_N) \Lambda^{-\frac12} \Lambda^{\frac12} Q^{\frac12}
                \big\|_{\mathcal{L}_2}^2 \, \dd s \right)^{\frac12}
\\& \leq \widehat C \, \lambda_N^{-\frac12}.
\end{split}
\end{equation*}
Thus, combining the above, we obtain the bound
\begin{equation}\label{eq:space-rate-1}
\|  X(t) -\widetilde{X}^N(t) \|_{L^p(\Omega;\H^1)}
                          \leq\widehat C \lambda_N^{-\frac12}.
\end{equation}
To deal with the second error term
$\| \widetilde{X}^N(t) - X^N(t) \|_{L^2(\Omega;\H^1)}$,
we need the regularity of $\widetilde{X}^N(t)$ that we now show. By definition of the auxiliary process
and the use of our assumptions as well as the regularity of the mild solution of the stochastic wave equation,
we obtain the bound
\begin{equation*}
\begin{split}
\big\| \widetilde{X}^N(t) \big\|_{L^p(\Omega;\H^2)}
& \leq   \big\| E_N (t) X^N(0) \big\|_{L^p(\Omega;\H^2)}
  + \Big\| \int_0^t E_N(t-s) {\bf F} ( X(s) ) \,\dd s \Big\|_{L^p(\Omega;\H^2)}
\\ & \quad
+ \Big\| \int_0^t E_N(t-s) B_N \,\dd W^Q(s)  \Big\|_{L^p(\Omega;\H^2)}
\\ & \leq \big\| X(0) \big\|_{L^p(\Omega;\H^2)} +
  \Big\| \int_0^t E(t-s) {\bf F}(X(s)) \,\dd s
           \Big\|_{L^p(\Omega;\H^2)}
     + \big\| \Lambda^{\frac12} Q^{\frac12} \big\|_{\mathcal{L}_2}
\\& \leq\widehat C < \infty.
\end{split}
\end{equation*}
Next, applying the dissipative condition of the nonlinearity $F$ gives
\begin{equation*}\label{eq:space-derivative}
\begin{split}
 & \frac{\dd \left( \| \widetilde{X}^N(t) - X^N(t) \|_{\H^1}^2 \right)} {\dd t}
\\ & \, = 2 \langle \widetilde{X}^N(t) - X^N(t),
\tfrac{\dd ( \widetilde{X}^N(t) - X^N(t) )}{\dd t}
\rangle_{\H^1}
\\ & \, =
2 \langle \widetilde{X}^N(t) - X^N(t),
A_N ( \widetilde{X}^N(t) - X^N(t) )
\rangle_{\H^1}
\\ \qquad \qquad & \qquad
+ 2 \langle \widetilde{X}^N(t) - X^N(t),
{\bf F}_N ( X(t) )  -  {\bf F}_N (X^N(t) )
\rangle_{\H^1}
\\ &  \, =
2 \langle \widetilde{X}^N(t) - X^N(t),
{\bf F}_N (\widetilde{X}^N(t) )  - {\bf F}_N (X^N(t) )
\rangle_{\H^1}
\\ & \qquad
+ 2 \langle \widetilde{X}^N(t) - X^N(t),
{\bf F}_N (X(t)) - {\bf F}_N (\widetilde{X}^N(t) )
\rangle_{\H^1}
\\ & \, \leq
2~\widehat C~\big\|  \widetilde{X}^N(t) - X^N(t) \big\|_{\H^1}^2
+
2 \langle \widetilde{X}^N(t) - X^N(t),
{\bf F}_N (X(t)) - {\bf F}_N ( \widetilde{X}^N(t) )
\rangle_{\H^1}.
\end{split}
\end{equation*}
Therefore, in dimension $d=1$, integrating both sides of the above relation over $[0,t]$ and using the Sobolev embedding inequality
$\dot{H}^1 \subset V:=C(\mathcal D;\mathbb R)$, the regularity of the auxiliary and mild solutions,
one gets the estimate
\begin{equation*}
\begin{split}
\| \widetilde{X}^N & (t)  - X^N(t) \|_{\H^1}^2
\\& \leq \widehat C \int_0^t \| \widetilde{X}^N(s) - X^N(s) \|_{\H^1}^2 \, \dd s
 +\widehat C \int_0^t
   \| F_N ( v(s) ) - F_N (\widetilde{v}^N(s)) \|^2 \, \dd s
\\ & \leq
\widehat C \int_0^t \| \widetilde{X}^N(s) - X^N(s) \|_{\H^1}^2 \, \dd s
\\ &  \quad +
 \widehat C \int_0^t
 \left(  1 + \|\widetilde{v}^N(s)\|_V^{2(\gamma-1)} +
                \|v(s)\|_V^{2(\gamma-1)} \right)
 \| v(s) - \widetilde{v}^N(s) \|^2 \, \dd s
 \\ & \leq
\widehat C \int_0^t \| \widetilde{X}^N(s) - X^N(s) \|_{\H^1}^2 \, \dd s
\\ &  \quad +
 \widehat C \int_0^t
 \left(  1 + \|\widetilde{v}^N(s)\|_1^{2(\gamma-1)} +
                \|v(s)\|_1^{2(\gamma-1)} \right)
 \| X(s) - \widetilde{X}^N(s) \|_{\H^1}^2 \, \dd s.
\end{split}
\end{equation*}
Unfortunately, the embedding $\dot{H}^1 \subset V$ is not valid in dimension $d=2$.
As a result, the error analysis can only be done at the expense of an order reduction due to an infinitesimal factor.
Indeed, in this case and for $\epsilon>0$, we obtain the estimate
\begin{equation*}
\begin{split}
\| \widetilde{X}^N & (t)  - X^N(t) \|_{\H^1}^2
\\& \leq \widehat C \int_0^t \| \widetilde{X}^N(s) - X^N(s) \|_{\H^1}^2 \, \dd s
 +\widehat C \int_0^t
   \| \Lambda^{\epsilon} P_N  \Lambda^{-\epsilon}
         ( F ( v(s) ) - F (\widetilde{v}^N(s)) ) \|^2 \, \dd s
\\& \leq\widehat  C \int_0^t \| \widetilde{X}^N(s) - X^N(s) \|_{\H^1}^2 \, \dd s
 +\widehat C \, \lambda_N^{2\epsilon}   \, \int_0^t
    \big\|  ( F ( v(s) ) - F (\widetilde{v}^N(s)) )
       \big\|_{L^{\frac 2{1+2\epsilon}}}^2 \, \dd s
\\ & \leq
\widehat C \int_0^t \| \widetilde{X}^N(s) - X^N(s) \|_{\H^1}^2 \, \dd s
\\ &  \quad +
 \widehat C \, \lambda_N^{2\epsilon}   \, \int_0^t
 \left(  1
   + \|\widetilde{v}^N(s)\|_{L^{\frac {(\gamma-1)} {\epsilon}}}^{2(\gamma-1)}
    +  \|v(s)\|_{L^{\frac {(\gamma-1)} {\epsilon}}}^{2(\gamma-1)} \right)
 \| v(s) - \widetilde{v}^N(s) \|^2 \, \dd s
 \\ & \leq
\widehat C \int_0^t \| \widetilde{X}^N(s) - X^N(s) \|_{\H^1}^2 \, \dd s
\\ &  \quad +
 \widehat C \, \lambda_N^{2\epsilon}   \,  \int_0^t
 \left(  1 + \|\widetilde{v}^N(s)\|_1^{2(\gamma-1)} +
                \|v(s)\|_1^{2(\gamma-1)} \right)
 \| X(s) - \widetilde{X}^N(s) \|_{\H^1}^2 \, \dd s,
\end{split}
\end{equation*}
where the  inequality \eqref{eq:Lambda-minus-epsilon}, H\"{o}lder's inequality
$\|ab\|_{L^{\frac 2{1+2\epsilon}}}
    \leq \|a\|_{L^{\frac 1 \epsilon}} \|b\|_{L^2}$  and $\dot{H}^1 \subset L^{\frac {\gamma-1}{\epsilon}}$ for $d=2$  were used.

Finally, taking expectation in the above error estimates and using Gronwall's lemma, H\"{o}lder's inequality, the regularity of $X(s), \widetilde{X}^N(s)$ and equation~\eqref{eq:space-rate-1}, we obtain for dimension $d=1$, the error bound for the second error term
\begin{equation*}
\| \widetilde{X}^N(t) - X^N(t) \|_{L^2(\Omega;\H^1)}
\leq\widehat  C \, \lambda_N^{-\frac12}.
\end{equation*}
In dimension $d=2$, for a sufficiently small $\epsilon > 0$, we obtain the error bound for the second error term
\begin{equation*}
\| \widetilde{X}^N(t) - X^N(t) \|_{L^2(\Omega;\H^1)}
\leq\widehat  C \, \lambda_N^{-\frac12+\epsilon}.
\end{equation*}
This, together with the bound \eqref{eq:space-rate-1} for the first error term, finishes the proof.
\end{proof}

\section{Strong convergence rates of the full discretization}
\label{sec:full-analysis}
In this section, we analyze the strong convergence rate of the temporal discretization of the semi-discrete
problem~\eqref{eq:spectral-galerkin} coming from a spectral Galerkin approximation of the stochastic wave equation~\eqref{eq:abstract-SPDE}.

Let $ M \in \N $ and consider a uniform mesh
$ \{ t_0, t_1, \cdots, t_M \} $
of the interval $[0, T ]$
satisfying $t_m = m \tau$ with $\tau = \tfrac TM$ being the time stepsize.
To motivate the proposed time integrator, we observe that the mild solution of the
semi-discrete problem can be approximated as follows
\begin{equation*}
\begin{split}
X^N(t_1) & = E_N( t_1-t_0 ) X^N(t_0)
  + \int_{t_0}^{t_1} E_N( t_1-s ) {\bf F}_N
       ( X^N (s) )\, \dd s
\\ & \quad + \int_{t_0}^{t_1} E_N( t_1-s )
       B_N\, \dd W^Q(s)
\\ & \approx  E_N( \tau ) X^N(t_0)
      + \tau {\bf F}_N  ( X^N (t_1) )
      + E_N(\tau) B_N ( W^Q(t_1)-W^Q(t_0) ).
\end{split}
\end{equation*}
Hence, we define the fully discrete numerical solution by
\begin{equation}\label{eq:full-discretization}
X_{N,m+1} = E_N(\tau) X_{N,m}
   + \tau  {\bf F}_N(X_{N,m+1})
   + E_N(\tau) B_N \Delta W_m,
\end{equation}
where $\Delta W_m := W^Q(t_{m+1}) - W^Q(t_m)$.

\begin{proposition}
Let Assumption~\ref{assum:nonlinearity} hold and take a time stepsize verifying $C_1 \tau <1$.
Then, the implicit scheme~\eqref{eq:full-discretization} admits a unique solution in $U_N \times U_N$.
\end{proposition}
\begin{proof}
First, note that the nonlinear mapping ${\bf F}_N$ satisfies the monotonicity condition in $\H^1$:
\begin{equation*}
\langle  {\bf F}_N (X) - {\bf F}_N (Y), X - Y \rangle_{\H^1 }
  \leq C_1  \| X - Y \|_{\H^1 },
\quad
X, Y \in U_N \times U_N.
\end{equation*}
Second, introduce the continuous mapping $G_{N,\tau}(z) := z - \tau \mathbf{F}_N(z)$.
We then obtain, for any $z_1,z_2 \in U_N \times U_N$, that
\begin{equation*}
\langle  G_{N,\tau}(z_1) - G_{N,\tau}(z_2), z_1 - z_2 \rangle_{\mathbb{H}^1}
  \ge ( 1 - C_1 \tau ) \| z_1 - z_2 \|_{\mathbb{H}^1}.
\end{equation*}
Since $C_1 \tau < 1$, it follows from the uniform monotonicity theorem in \cite[Theorem C.2]{stuart1996dynamical} that the implicit scheme~\eqref{eq:full-discretization} is well-defined and a.s. uniquely solvable.
\end{proof}

To analyze the error of the above numerical scheme, we begin by rewriting the mild solution of the spectral Galerkin method~\eqref{eq:mild-space} as follows
\begin{equation}\label{eq:mild-space2}
\begin{split}
X^N(t_{m+1}) &= E_N(\tau) X^N(t_{m})
  + \int_{t_m}^{t_{m+1}} E_N(t_{m+1} - s) {\bf F}_N(X^N(s)) \,\dd s
  + \mathcal{O}_{N,t_m,t_{m+1}}
\\&=
E_N(\tau) X^N(t_{m})
  + \tau   {\bf F}_N(X^N(t_{m+1}))
               + E_N(\tau) B_N \Delta W_m + R_{m+1},
\end{split}
\end{equation}
where we define
\begin{equation*}
 \mathcal{O}_{N,t_m,t_{m+1}} :=
  \int_{t_m}^{t_{m+1}} E_N (t_{m+1}-s) B_N \, \dd W^Q(s)
\end{equation*}
and
\begin{equation*}
\begin{split}
R_{m+1}& := \int_{t_m}^{t_{m+1}}
\left( E_N(t_{m+1} - s) {\bf F}_N(X^N(s))
              - {\bf F}_N(X^N(t_{m+1})) \right) \,\dd s
\\ & \quad
   + \mathcal{O}_{N,t_m,t_{m+1}} - E_N(\tau) B_N \Delta W_m.
\end{split}
\end{equation*}

Subtracting the fully discrete solution~\eqref{eq:full-discretization} from the semi-discrete solution~\eqref{eq:mild-space2} yields
the following recursion for the temporal error
\begin{equation}\label{eq:err(N,M+1)}
e_{N,m+1} = E_N(\tau) e_{N,m} + \tau \big(
{\bf F}_N ( X^N(t_{m+1}) ) - {\bf F}_N ( X_{N,m+1} )  \big)
+ R_{m+1},
\end{equation}
where we define
\[
e_{N,m+1} := X^N(t_{m+1}) - X_{N,m+1}
.
\]

We are now fully prepared to prove an upper mean-square error bound for the temporal discretization of the semi-discrete
problem~\eqref{eq:spectral-galerkin}. This result plays a fundamental role in proving the strong convergence rate of the fully discrete scheme.

\begin{proposition}[Upper error bound]
\label{prop:upper-bounds}
Suppose that Assumptions \ref{assum:nonlinearity} and \ref{assum:noise-term} are fulfilled and let the time stepsize $\tau \leq \tfrac 1{6 ~ \rm{max}\{0,C_1\}}$.
Then, for any $m \in \{0,1,\cdots,M-1\}, M \in \N$, there exists a constant $\widehat C > 0$,
independent of $m$ and $N$, such that the error in the time discretization satisfies
\begin{equation*}
\E \Big[ \big\| e_{N,m+1} \big\|_{\H^1}^2 \Big]
\leq\widehat C \left (
\sum_{i=1}^{m+1}
\E \Big[ \big\| R_i \big\|_{\H^1}^2 \Big]
+ \tau^{-1} \sum_{i=0}^{m}
\E \Big[ \big\| \E[ R_{i+1}|\mathcal{F}_{t_{i}} ] \big\|_{\H^1}^2 \Big]  \right ).
\end{equation*}
\end{proposition}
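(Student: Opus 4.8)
The starting point is the error recursion \eqref{eq:err(N,M+1)}. The plan is to test it against $e_{N,m+1}$ in the $\H^1$-inner product, exploit the monotonicity of ${\bf F}_N$ on $\H^1$, and then iterate. Writing $e_{N,m+1}=E_N(\tau)e_{N,m}+\tau\Delta {\bf F}_{m+1}+R_{m+1}$ with $\Delta {\bf F}_{m+1}:={\bf F}_N(X^N(t_{m+1}))-{\bf F}_N(X_{N,m+1})$, I would first take $\H^1$-norms and expand the square:
\[
\|e_{N,m+1}\|_{\H^1}^2
=\langle E_N(\tau)e_{N,m}+\tau\Delta{\bf F}_{m+1}+R_{m+1},\,e_{N,m+1}\rangle_{\H^1}.
\]
Using $\langle \Delta{\bf F}_{m+1},e_{N,m+1}\rangle_{\H^1}\leq C_1\|e_{N,m+1}\|_{\H^1}^2$ (the stated monotonicity bound, with the convention that only $\max\{0,C_1\}$ matters), together with the stability $\|E_N(\tau)X\|_{\H^1}\leq\|X\|_{\H^1}$ and Cauchy--Schwarz on the remaining terms, one gets
\[
\|e_{N,m+1}\|_{\H^1}^2\leq \|e_{N,m}\|_{\H^1}\,\|e_{N,m+1}\|_{\H^1}+\tau C_1\|e_{N,m+1}\|_{\H^1}^2+\langle R_{m+1},e_{N,m+1}\rangle_{\H^1}.
\]
After absorbing the $\tau C_1$-term using $\tau\leq \tfrac1{6\max\{0,C_1\}}$ and applying Young's inequality $ab\leq \tfrac12 a^2+\tfrac12 b^2$ to the first term on the right, I obtain a recursion of the form $\|e_{N,m+1}\|_{\H^1}^2\leq (1+c\tau)\|e_{N,m}\|_{\H^1}^2+\langle R_{m+1},e_{N,m+1}\rangle_{\H^1}+(\text{lower order})$; the precise bookkeeping of constants here is routine but must be done carefully so that the factor in front of $\|e_{N,m}\|_{\H^1}^2$ is of the form $1+c\tau$.

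The second, and more delicate, ingredient is the treatment of the term $\langle R_{m+1},e_{N,m+1}\rangle_{\H^1}$ after summation. A direct Cauchy--Schwarz bound $\langle R_{m+1},e_{N,m+1}\rangle_{\H^1}\leq \|R_{m+1}\|_{\H^1}\|e_{N,m+1}\|_{\H^1}$ would force a $\tau^{-1}$ loss on $\sum\|R_i\|_{\H^1}^2$, which is too crude to give the optimal rate. The remedy—this is the heart of the argument, following \cite{wang2020meanEuler}—is to split $R_{m+1}$ into its conditional mean and its martingale-difference-like part: write $R_{m+1}=\E[R_{m+1}\mid\mathcal F_{t_m}]+(R_{m+1}-\E[R_{m+1}\mid\mathcal F_{t_m}])$. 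For the first piece, after solving the recursion explicitly by iteration one encounters sums $\sum_i \langle \prod_{j}E_N(\tau)\cdots \E[R_{i+1}\mid\mathcal F_{t_i}],\,\cdots\rangle$; one estimates these using Cauchy--Schwarz in a way that produces $\tau^{-1}\sum_i\E\|\E[R_{i+1}\mid\mathcal F_{t_i}]\|_{\H^1}^2$ plus a controllable $\tau\sum\E\|e_{N,i}\|_{\H^1}^2$ which is later absorbed. For the zero-conditional-mean piece, the key is that cross terms $\E\langle R_{i+1}-\E[R_{i+1}\mid\mathcal F_{t_i}],\,E_N(\cdot)^{\,k}e_{N,i}\rangle_{\H^1}$ vanish because $e_{N,i}$ (and all earlier quantities) are $\mathcal F_{t_i}$-measurable and $E_N$ is deterministic; hence only the "diagonal" terms $\E\|R_{i+1}-\E[R_{i+1}\mid\mathcal F_{t_i}]\|_{\H^1}^2\leq \E\|R_{i+1}\|_{\H^1}^2$ survive, contributing the $\sum\E\|R_i\|_{\H^1}^2$ term with no negative power of $\tau$.

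I would organize the argument as follows: (i) derive the one-step inequality above with the factor $1+c\tau$; (ii) unroll the recursion from $m+1$ down to $0$, using $e_{N,0}=0$, to express $\|e_{N,m+1}\|_{\H^1}^2$ in terms of the $R_i$'s; (iii) take expectations and perform the conditional-mean splitting, using the $\mathcal F_{t_i}$-measurability of $e_{N,i}$ to kill the martingale cross terms; (iv) collect the diagonal martingale terms into $\sum_{i=1}^{m+1}\E\|R_i\|_{\H^1}^2$ and the conditional-mean terms into $\tau^{-1}\sum_{i=0}^m\E\|\E[R_{i+1}\mid\mathcal F_{t_i}]\|_{\H^1}^2$, together with a residual term $c\tau\sum_{i=0}^m\E\|e_{N,i}\|_{\H^1}^2$; (v) apply the discrete Gronwall lemma to absorb the residual, giving a constant $\widehat C$ that depends on $T$, $C_1$ but not on $m$ or $N$. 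The main obstacle is step (iii)–(iv): one must set up the summation so that the $\tau^{-1}$ factor attaches \emph{only} to the conditional-mean part and not to the full $\|R_i\|_{\H^1}^2$, since the latter (as computed later from the definition of $R_{m+1}$) is $\mathcal O(\tau^2)$ per step whereas $\E[R_{i+1}\mid\mathcal F_{t_i}]$ is $\mathcal O(\tau^{3/2})$ or better—so the crude bound would lose half an order. Getting the bookkeeping right, together with the discrete Gronwall step, is where all the care is needed; the rest is routine estimation using the stability of $E_N(\tau)$ and the monotonicity of ${\bf F}_N$ already established in the excerpt.
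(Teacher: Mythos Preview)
Your plan has the right overall shape---one-step energy estimate, conditional-mean splitting of $R_{m+1}$, discrete Gronwall---but there is a genuine gap in step~(iii). Testing the recursion against $e_{N,m+1}$ produces the cross term $\langle R_{m+1},e_{N,m+1}\rangle_{\H^1}$, and after iterating you face $\E\big[\langle R_{i+1}-\E[R_{i+1}\mid\mathcal F_{t_i}],\,e_{N,i+1}\rangle_{\H^1}\big]$. You claim this vanishes by $\mathcal F_{t_i}$-measurability, but it is $e_{N,i}$, not $e_{N,i+1}$, that is $\mathcal F_{t_i}$-measurable: $e_{N,i+1}$ depends on $\Delta W_i$ through both $R_{i+1}$ and the implicitly defined $X_{N,i+1}$. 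If you try to repair this by substituting $e_{N,i+1}=E_N(\tau)e_{N,i}+\tau\Delta{\bf F}_{i+1}+R_{i+1}$ into the pairing, you pick up a term $\tau\,\E\big[\langle R_{i+1}-\E[R_{i+1}\mid\mathcal F_{t_i}],\,\Delta{\bf F}_{i+1}\rangle_{\H^1}\big]$ that you cannot control without moment bounds on $\Delta{\bf F}_{i+1}={\bf F}_N(X^N(t_{i+1}))-{\bf F}_N(X_{N,i+1})$---and the whole purpose of this proposition is precisely to avoid a priori moment bounds on the fully discrete solution $X_{N,i+1}$ in the super-linear regime.

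The paper's argument circumvents this by a small but decisive rearrangement: instead of testing against $e_{N,m+1}$, it moves $\tau\Delta{\bf F}_{m+1}$ to the left and squares both sides of
\[
e_{N,m+1}-\tau\Delta{\bf F}_{m+1}=E_N(\tau)e_{N,m}+R_{m+1}.
\]
On the left, expanding the square and using monotonicity (and dropping the nonnegative $\tau^2\|\Delta{\bf F}_{m+1}\|_{\H^1}^2$) gives the lower bound $(1-2C_1\tau)\,\E\|e_{N,m+1}\|_{\H^1}^2$. On the right, the only cross term is $2\,\E\big[\langle E_N(\tau)e_{N,m},R_{m+1}\rangle_{\H^1}\big]$, and now $E_N(\tau)e_{N,m}$ \emph{is} $\mathcal F_{t_m}$-measurable, so conditioning replaces $R_{m+1}$ by $\E[R_{m+1}\mid\mathcal F_{t_m}]$ at no cost; Young's inequality with weights $\tau,\tau^{-1}$ then yields exactly the two sums in the statement. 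The iteration and Gronwall step are as you describe. In short: do not test against $e_{N,m+1}$; isolate the nonlinear increment on the left before squaring so that $R_{m+1}$ is paired with an $\mathcal F_{t_m}$-adapted quantity.
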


\begin{proof}
By the definition of the error term $e_{N,m+1}$ in \eqref{eq:err(N,M+1)}, we know that,
for $m \in \{ 1, 2, \cdots, M-1 \}$,
\begin{equation*}
\begin{split}
\big\| e_{N,m+1} - &
\tau \big( {\bf F}_N(X^N(t_{m+1})) -
     {\bf F}_N(X_{N,m+1}) \big) \big\|_{\H^1}^2
= \big\| E_N(\tau) e_{N,m} + R_{m+1} \big\|_{\H^1}^2
\\&=
\big\| E_N(\tau) e_{N,m} \big\|_{\H^1}^2
+ \big\|  R_{m+1} \big\|_{\H^1}^2
+ 2 \left \langle E_N(\tau) e_{N,m} , R_{m+1} \right \rangle_{\H^1}.
\end{split}
\end{equation*}
Taking expectation on both sides of the above relation and using Young's inequality, one deduces
\begin{equation*}
\begin{split}
\E \Big[ \big\| & e_{N,m+1} -
\tau \big( {\bf F}_N ( X^N(t_{m+1}) ) -
       {\bf F}_N ( X_{N,m+1} ) \big) \big\|_{\H^1}^2 \Big]
\\&
= \E \Big[ \big\| E_N(\tau) e_{N,m} \big\|_{\H^1}^2 \Big]
+ \E \Big[ \big\|  R_{m+1} \big\|_{\H^1}^2 \Big]
+ 2 \E \Big[ \left \langle E_N(\tau) e_{N,m} , R_{m+1}
\right \rangle_{\H^1} \Big]
\\& \leq
\E \Big[ \big\| e_{N,m} \big\|_{\H^1}^2 \Big]
+ \E \Big[ \big\|  R_{m+1} \big\|_{\H^1}^2 \Big]
+ 2 \E \Big[ \left \langle E_N(\tau) e_{N,m} ,
\E \left[ R_{m+1}|\mathcal{F}_{t_m} \right] \right \rangle_{\H^1} \Big]
\\& \leq
\E \Big[ \big\| e_{N,m} \big\|_{\H^1}^2 \Big]
+ \E \Big[ \big\|  R_{m+1} \big\|_{\H^1}^2 \Big]
+ \tau \E \Big[  \big\| e_{N,m} \big\|_{\H^1}^2 \Big] +
\tau^{-1} \E \Big[ \big\|
\E \left[ R_{m+1}|\mathcal{F}_{t_m} \right] \big\|_{\H^1}^2 \Big],
\end{split}
\end{equation*}
where the stability of $E_N(\tau)$ and properties of conditional expectations were used.
Owing to the monotonicity of the nonlinearity $F$, one can then infer that
\begin{equation*}
\begin{split}
\E \Big[ \big\| & e_{N,m+1} -
\tau \big( {\bf F}_N ( X^N(t_{m+1}) )
          - {\bf F}_N ( X_{N,m+1} ) \big) \big\|_{\H^1}^2 \Big]
   \\&  =  \E \Big[ \big\| e_{N,m+1} \big\|_{\H^1}^2 \Big]
     + \tau^2  \E   \Big[ \big\|  {\bf F}_N ( X^N(t_{m+1}) )
           - {\bf F}_N ( X_{N,m+1} ) \big\|_{\H^1}^2 \Big]
   \\& \quad - 2 \tau \E \Big[ \left \langle
    e_{N,m+1}, {\bf F}_N ( X^N(t_{m+1}) )
       - {\bf F}_N ( X_{N,m+1} ) \right \rangle_{\H^1} \Big]
   \\& \geq ( { 1 - 2 C_1 \tau} )
      \E \Big[ \big\| e_{N,m+1} \big\|_{\H^1}^2 \Big].
\end{split}
\end{equation*}
Therefore, by iterating and noting that $e_{N,0}=0$, we get
\begin{equation*}
\begin{split}
\E \Big[ & \big\| e_{N,m+1} \big\|_{\H^1}^2 \Big]
\\& \leq ( { 1 - 2 C_1 \tau} )^{ - ( m + 1 )}
\E \Big[ \big\|  e_{N,0} \big\|_{\H^1}^2 \Big]
+ \sum_{i=1}^{m+1} ( { 1 - 2 C_1 \tau} )^{ - ( m + 2 -i )}
\E \Big[ \big\|  R_i \big\|_{\H^1}^2 \Big]
\\& \quad
+ \tau \sum_{i=0}^{m} ( { 1 - 2 C_1 \tau} )^{ - ( m + 1  -i )}
\E \Big[ \big\| e_{N,i} \big\|_{\H^1}^2 \Big]
\\& \quad + \tau^{-1} \sum_{i=0}^{m}
  ( { 1 - 2 C_1 \tau} )^{ - ( m + 1 -i )}
\E \Big[ \big\| \E \left[ R_{i+1}|\mathcal{F}_{t_i}\right] \big\|_{\H^1}^2 \Big]
\\& \leq\widehat
C \, \tau \sum_{i=0}^{m}
\E \Big[ \big\| e_{N,i} \big\|_{\H^1}^2 \Big]
+\widehat C \sum_{i=1}^{m+1}
\E \Big[ \big\| R_i \big\|_{\H^1}^2 \Big]
+\widehat C \,\tau^{-1} \sum_{i=0}^{m}
\E \Big[ \big\| \E \left[ R_{i+1}|\mathcal{F}_{t_i} \right] \big\|_{\H^1}^2 \Big],
\end{split}
\end{equation*}
where {we used $(1 - 2 C_1 \tau)^{-m} \leq 1$ for $C_1 < 0$
and $(1 - 2 C_1 \tau)^{-m} \leq (1+3C_1\tau)^m \leq e^{3C_1 T}$ for $C_1 > 0, \, \tau \leq \tfrac 1{6C_1}$.}
Finally, the use of Gronwall's inequality results in the desired assertion.
\end{proof}

Equipped with the above preparations, we now prove the mean-square convergence rates
of the temporal discretization.

\begin{theorem}[Strong convergence rates of the temporal semi-discretization]
\label{thm-main2}
Let Assumptions~\ref{assum:nonlinearity} and~\ref{assum:noise-term} be fulfilled
and assume that $\|X(0)\|_{L^p(\Omega;\H^2)} < \infty$ for some $p\ge2$.
Let $X^N(t)$ denote the mild solution of the semi-discrete problem~\eqref{eq:spectral-galerkin}
resulting from a spectral Galerkin discretization. Let $X_{N,m}$ denote the numerical approximation
in time by the modified implicit exponential Euler scheme~\eqref{eq:full-discretization}
with time stepsize $\tau = \tfrac TM$
{satisfying $\tau \leq \tfrac 1{6 ~ \rm{max}\{0,C_1\}}$.}
Then, there exists a positive constant $\widehat C$, independent of $N,M \in \N$, such that
\begin{equation*}
\sup_{0 \leq m \leq M}
\big\| X^N(t_m) -  X_{N,m}  \big\|_{L^2(\Omega;\H^1)}
\leq\widehat C \, \tau, \quad \text{for dimension} \quad d=1,
\end{equation*}
and for sufficiently small $\epsilon > 0$,
\begin{equation*}
\sup_{0 \leq m \leq M}
\big\| X^N(t_m) -  X_{N,m}  \big\|_{L^2(\Omega;\H^1)}
\leq\widehat C \, \tau^{1-\epsilon}, \quad \text{for dimension} \quad d=2.
\end{equation*}
\end{theorem}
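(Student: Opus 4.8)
The plan is to invoke the upper error bound of Proposition~\ref{prop:upper-bounds} and to control the two sums on its right-hand side through the local residuals $R_i$ appearing in \eqref{eq:mild-space2}. Everything therefore reduces to two estimates: a \emph{pathwise-size} bound $\sum_{i=1}^{m+1}\E\big[\|R_i\|_{\H^1}^2\big]\le\widehat C\,\tau^2$, and a \emph{conditional} bound gaining essentially one extra power of $\tau$, namely $\E\big[\|\E[R_{i+1}\mid\mathcal{F}_{t_i}]\|_{\H^1}^2\big]\le\widehat C\,\tau^{4}$ if $d=1$ and $\le\widehat C\,\tau^{4-2\epsilon}$ if $d=2$, so that $\tau^{-1}\sum_{i=0}^{m}\E\big[\|\E[R_{i+1}\mid\mathcal{F}_{t_i}]\|_{\H^1}^2\big]\le\widehat C\,\tau^{2}$ (resp. $\widehat C\,\tau^{2-2\epsilon}$). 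Inserting both into Proposition~\ref{prop:upper-bounds} gives $\sup_{0\le m\le M}\E\big[\|e_{N,m}\|_{\H^1}^2\big]\le\widehat C\,\tau^{2}$ for $d=1$ and $\le\widehat C\,\tau^{2-2\epsilon}$ for $d=2$; taking square roots and relabelling $\epsilon$ yields the claim. Throughout I would use the uniform moment bounds \eqref{eq:moment-bounds-Z(t)} and \eqref{eq:space-regularity-Z^N}, with $p$ taken large enough to absorb the polynomial growth of $F$.

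To organise the residual estimates I would split $R_{m+1}=R^{(2)}_{m+1}+R^{(1,a)}_{m+1}+R^{(1,b)}_{m+1}$, where
\[
R^{(2)}_{m+1}:=\mathcal{O}_{N,t_m,t_{m+1}}-E_N(\tau)B_N\Delta W_m=\int_{t_m}^{t_{m+1}}\big(E_N(t_{m+1}-s)-E_N(\tau)\big)B_N\,\dd W^Q(s),
\]
while $R^{(1,a)}_{m+1}:=\int_{t_m}^{t_{m+1}}\big(E_N(t_{m+1}-s)-I\big){\bf F}_N(X^N(s))\,\dd s$ and $R^{(1,b)}_{m+1}:=\int_{t_m}^{t_{m+1}}\big({\bf F}_N(X^N(s))-{\bf F}_N(X^N(t_{m+1}))\big)\,\dd s$. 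For $R^{(2)}$, the It\^{o} isometry, the $\Lambda$-shifted form of \eqref{eq:semigroup} used with exponent $1$ (note $|(t_{m+1}-s)-\tau|\le\tau$) and Assumption~\ref{assum:noise-term} give $\E[\|R^{(2)}_{m+1}\|_{\H^1}^2]\le\widehat C\,\tau^3$, while $\E[R^{(2)}_{m+1}\mid\mathcal{F}_{t_m}]=0$ since it is a martingale increment. For $R^{(1,a)}$ I would combine \eqref{eq:semigroup} (also valid for $E_N$) with Lemma~\ref{lem:F-theta} (with $\theta=1$ if $d=1$, $\theta=1-\epsilon$ if $d=2$) and \eqref{eq:space-regularity-Z^N} to obtain $\|R^{(1,a)}_{m+1}\|_{L^2(\Omega;\H^1)}\le\widehat C\int_{t_m}^{t_{m+1}}(t_{m+1}-s)^{\theta}\,\dd s\le\widehat C\,\tau^{1+\theta}$, which already bounds $\E[\|R^{(1,a)}_{m+1}\|_{\H^1}^2]$ and, by conditional Jensen, $\E[\|\E[R^{(1,a)}_{m+1}\mid\mathcal{F}_{t_m}]\|_{\H^1}^2]$ by $\widehat C\,\tau^{2+2\theta}$.

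The delicate term is $R^{(1,b)}$, for which $\|{\bf F}_N(X^N(s))-{\bf F}_N(X^N(t_{m+1}))\|_{\H^1}=\|F_N(v^N(s))-F_N(v^N(t_{m+1}))\|$. For the \emph{unconditional} bound I would use the mean value theorem, the growth bound on $f'$, H\"{o}lder's inequality and the Sobolev embedding ($\dot{H}^1\subset V$ if $d=1$, $\dot{H}^{1/2}\subset L^4$ if $d=2$) together with the temporal regularity \eqref{eq:time-continuity-Z^N} (taken with $\eta=\tfrac32$, hence rate $\tfrac12$) to get $\|F_N(v^N(s))-F_N(v^N(t_{m+1}))\|_{L^2(\Omega;\dot{H}^0)}\le\widehat C\,(t_{m+1}-s)^{1/2}$, whence $\E[\|R^{(1,b)}_{m+1}\|_{\H^1}^2]\le\widehat C\,\tau^3$. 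For the \emph{conditional} bound, which must gain an additional (nearly) half power of $\tau$, I would substitute the mild representation of $\Delta_v:=v^N(t_{m+1})-v^N(s)$,
\[
\Delta_v=-\Lambda_N^{\frac12}S_N(\tau')u^N(s)+\big(C_N(\tau')-I\big)v^N(s)+\int_s^{t_{m+1}}C_N(t_{m+1}-r)F_N(v^N(r))\,\dd r+\int_s^{t_{m+1}}C_N(t_{m+1}-r)P_N\,\dd W^Q(r),
\]
with $\tau':=t_{m+1}-s$, into the first-order Taylor expansion $F_N(v^N(s))-F_N(v^N(t_{m+1}))=-P_N\big(F'(v^N(s))\Delta_v+\tfrac12 F''(\xi)\Delta_v^2\big)$. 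Since $F'(v^N(s))$ is $\mathcal{F}_s$-measurable and the stochastic convolution in $\Delta_v$ is a martingale increment over $[s,t_{m+1}]$, the tower property makes its contribution to $\E[\,\cdot\mid\mathcal{F}_{t_m}]$ vanish.

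The remaining pieces I would estimate in $L^2(\Omega;\dot{H}^0)$ by combining H\"{o}lder's inequality $\|f'(v^N(s))g\|\le\|f'(v^N(s))\|_{L^{p_1}}\|g\|_{L^{p_2}}$ (with $\tfrac1{p_1}+\tfrac1{p_2}=\tfrac12$ and $p_1$ large but finite), the embeddings $\dot{H}^{1-2/p_2}\subset L^{p_2}$ and $\dot{H}^1\subset L^{p_1(\gamma-1)}$ (both valid for $d\le 2$), the operator estimates \eqref{eq:cosine-sine} applied with exponent $2/p_2$ — which equals $1$ when $d=1$ (one may take $p_2=2$, $p_1=\infty$, using $\dot{H}^1\subset V$) and $1-\epsilon$ when $d=2$ — the growth bounds on $f',f''$, and \eqref{eq:space-regularity-Z^N} together with \eqref{eq:time-continuity-Z^N} for the quadratic remainder $P_N(F''(\xi)\Delta_v^2)$. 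This yields $\|\E[F_N(v^N(s))-F_N(v^N(t_{m+1}))\mid\mathcal{F}_{t_m}]\|_{L^2(\Omega;\dot{H}^0)}\le\widehat C\,(t_{m+1}-s)^{1-\epsilon}$ (with $\epsilon=0$ if $d=1$), hence $\E[\|\E[R^{(1,b)}_{m+1}\mid\mathcal{F}_{t_m}]\|_{\H^1}^2]\le\widehat C\,\tau^{4-2\epsilon}$. Summing over $i$ and feeding all of the above into Proposition~\ref{prop:upper-bounds} completes the argument. I expect the only genuinely delicate point to be this conditional estimate of $R^{(1,b)}$ in dimension $d=2$: there $\dot{H}^1\subset V$ fails, so the term $\Lambda_N^{\frac12}S_N(\tau')u^N(s)$ cannot be bounded in $L^2$ against the $\dot{H}^1$-moments of $v^N$, and one must trade it — using the smoothing of $S_N(\tau')$ and a Sobolev exponent slightly below one — for the reduced rate $\tau^{1-\epsilon}$.
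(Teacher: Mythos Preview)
Your proposal is correct and follows essentially the same route as the paper: reduction to Proposition~\ref{prop:upper-bounds}, the same three-term decomposition of $R_{m+1}$ (your $R^{(2)},R^{(1,a)},R^{(1,b)}$ are the paper's $J_3,J_1,J_2$), and for the conditional estimate of $R^{(1,b)}$ the identical device of Taylor-expanding $F_N$ around $v^N(s)$, inserting the mild representation of $v^N(t_{m+1})-v^N(s)$, killing the stochastic-integral contribution via the tower property, and bounding the remaining pieces with \eqref{eq:cosine-sine}, Lemma~\ref{lem:F-theta}, and Lemma~\ref{lem:XN-regularity}. The only cosmetic differences are in the precise H\"{o}lder/Sobolev exponent bookkeeping for $d=2$ (the paper organises the conditional terms as $K_1,\ldots,K_4$ and obtains $\tau^{4-4\epsilon}$ rather than your $\tau^{4-2\epsilon}$, which amounts to a relabelling of $\epsilon$).
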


\begin{proof}
We start by estimating the two terms
$\E \Big[ \big\| R_{i+1} \big\|_{\H^1}^2 \Big]$
and
$\E \Big[
\big\| \E \left[ R_{i+1}|\mathcal{F}_{t_i} \right] \big\|_{\H^1}^2
  \Big] $ on the right-hand side of the upper error bound given in Proposition~\ref{prop:upper-bounds}.
We first divide the term $\| R_{i+1} \|_{ L^2 ( \Omega; \H^1 ) }$
into three parts as follows,
\begin{equation}\label{eq:R-sepatation}
\begin{split}
\| R_{i+1} \|_{ L^2 ( \Omega; \H^1 ) }  & \leq
\Big\| \int_{t_i}^{t_{i+1}} \big( E_N(t_{i+1} - s) {\bf F}_N(X^N(s))
 - {\bf F}_N ( X^N(t_{i+1}) ) \big) \,\dd s \Big\|_{L^2(\Omega; \H^1)}
\\& \quad +
\Big\| \int_{t_i}^{t_{i+1}}  E_N( t_{i+1} - s)
   B_N \,\dd W^Q(s) -  E_N (\tau) B_N \Delta W_i
      \Big\|_{ L^2 ( \Omega; \H^1 ) }
\\ & \leq
 \Big\| \int_{t_i}^{t_{i+1}}
 \big( E_N (t_{i+1} - s) - I \big)  {\bf F}_N ( X^N (s) ) \,\dd s
\Big\|_{ L^2 ( \Omega; \H^1 ) }
\\& \quad +
\Big\| \int_{t_i}^{t_{i+1}}
\big( {\bf F}_N ( X^N(s) ) - {\bf F}_N ( X^N(t_{i+1}) ) \big) \,\dd s
\Big\|_{ L^2 ( \Omega; \H^1 ) }
\\& \quad +
\Big\| \int_{t_i}^{t_{i+1}}
\big[ E_N( t_{i+1} - s) - E_N( t_{i+1} - t_i) \big] B_N \,\dd W^Q(s)
\Big\|_{ L^2 ( \Omega; \H^1 ) }
\\& =: J_1 + J_2 + J_3.
\end{split}
\end{equation}
In dimension $d=1$.
Together with \eqref{eq:cosine-sine} in Lemma~\ref{lem:operator-estimate}, the assumptions on $F$, Lemma~\ref{lem:F-theta} and \eqref{eq:space-regularity-Z^N},
we arrive at
\begin{equation}\label{eq:J1}
\begin{split}
J_1  &=
 \Big\| \int_{t_i}^{t_{i+1}}
 \big( E_N(t_{i+1} - s) -I \big)  {\bf F}_N ( X^N (s) ) \,\dd s
 \Big\|_{ L^2 ( \Omega; \H^1 ) }
\\ & \leq
\int_{t_i}^{t_{i+1}}
 ( t_{i+1} - s)
 \big\|   F ( v^N (s) ) \big\|_{ L^2 ( \Omega; \dot{H}^1 ) } \,\dd s
\\& \leq\widehat C \tau^2.
\end{split}
\end{equation}
In dimension $d=2$, using \eqref{eq:F-theta} with $\theta = 1-\epsilon$ and Lemma \ref{lem:XN-regularity}, we arrive at the estimate
\begin{equation}\label{eq:J1'}
\begin{split}
J_1  &=
 \Big\| \int_{t_i}^{t_{i+1}}
 \big( E_N(t_{i+1} - s) -I \big)  {\bf F}_N ( X^N (s) ) \,\dd s
 \Big\|_{ L^2 ( \Omega; \H^1 ) }
\\ & \leq
\int_{t_i}^{t_{i+1}}
 ( t_{i+1} - s)^{ 1 - \epsilon }
 \big\|   F ( v^N (s) )
  \big\|_{ L^2 ( \Omega; \dot{H}^{1-\epsilon} ) } \,\dd s
\\& \leq\widehat C \,  \tau^{ 1 - \epsilon }   \int_{t_i}^{t_{i+1}}
  \big( 1 +   \| v^N (s)
      \|_{L^{2\gamma}( \Omega; \dot{H}^{1} )}^{\gamma}\big)\,\dd s
\\& \leq\widehat C \,  \tau^{ 2 - \epsilon }.
\end{split}
\end{equation}
For the second term $J_2$, in dimension $d=1$, we obtain
\begin{equation*}
\begin{split}
J_2  &=
 \Big\| \int_{t_i}^{t_{i+1}}
\big( {\bf F}_N (X^N(t_{i+1})) - {\bf F}_N (X^N(s)) \big)\, \dd s
\Big\|_{ L^2 ( \Omega; \H^1 ) }
\\& \leq
\int_{t_i}^{t_{i+1}} \Big\|F_N ( v^N(t_{i+1})) - F_N (v^N(s))
    \Big\|_{ L^2 ( \Omega;U)}\,   \dd s
\\& \leq\widehat C
\int_{t_i}^{t_{i+1}}
\left( 1+ \| v^N(s) \|_{L^{4(\gamma-1)}(\Omega;V)}^{\gamma-1}
   + \| v^N(t_{i+1}) \|_{L^{4(\gamma-1)}(\Omega;V)}^{\gamma-1} \right)
      \\& \qquad \times
   \| v^N (t_{i+1}) - v^N(s) \|_{ L^4 ( \Omega; U )}\, \dd s
\\& \leq\widehat C \tau^{\frac32}.
\end{split}
\end{equation*}
For the term $J_2$ in dimension $d=2$, we do a Taylor expansion of the nonlinearity, denote
$\xi(\lambda) := v^N(s) + \lambda (v^N(t_{i+1}) - v^N(s))$, and apply H\"{o}lder's inequality,
the Sobolev embedding inequality
$\dot{H}^{2\epsilon} \subset L^{\frac 2 {1-2\epsilon}}$, for a sufficiently  small $\epsilon$, Assumption~\ref{assum:nonlinearity} and
\eqref{eq:time-continuity-Z^N} to acquire the bound
\begin{equation}\label{eq:J2'}
\begin{split}
J_2  & \leq \int_{t_i}^{t_{i+1}}
\Big\| F ( v^N(t_{i+1})) - F (v^N(s)) \Big\|_{ L^2 ( \Omega; U )} \,\dd s
\\& = \int_{t_i}^{t_{i+1}} \Big\|
\int_0^1 F'(\xi(\lambda)) ( v^N(t_{i+1}) -  v^N(s) ) \,\dd \lambda
    \Big\|_{ L^2 ( \Omega; U )}   \,\dd s
\\& \leq\widehat C
\int_{t_i}^{t_{i+1}}  \int_0^1
   \Big\| \| F'(\xi(\lambda)) \|_{L^{\frac 1\epsilon}}  \cdot
          \| v^N(t_{i+1}) -  v^N(s) \|_{L^{\frac 2{1-2\epsilon}}}
     \Big\|_{ L^2 ( \Omega; \R )}
     \dd \lambda  \,\dd s
\\ & \leq\widehat C\int_{t_i}^{t_{i+1}}
\big( 1 + \sup_{s\in[0,T]}
\| v^N(s)\|_{L^{4(\gamma-1)}(\Omega;\dot{H}^1)}^{\gamma-1} \big) \cdot \| v^N (t_{i+1}) - v^N(s)
                 \|_{ L^4 ( \Omega; \dot{H}^{2\epsilon} )} \,\dd s
\\& \leq\widehat C \tau^{\frac32}.
\end{split}
\end{equation}
For the last term $J_3$, we apply It\^{o}'s isometry, \eqref{eq:cosine-sine} in
Lemma~\ref{lem:operator-estimate} (using the stability of the projection) and
the assumption on the noise \eqref{eq:ass-AQ-condition} to obtain the bound
\begin{equation}\label{eq:J3}
\begin{split}
J_3  &=
\Big\| \int_{t_i}^{t_{i+1}}
\big[ E_N( t_{i+1} - t_i) - E_N( t_{i+1} - s) \big] B_N \,\dd W^Q(s)
\Big\|_{ L^2 ( \Omega; \H^1 ) }
\\ &  \leq
\widehat C \Big( \int_{t_i}^{t_{i+1}}
\left(\big\| ( S_N ( t_{i+1} - t_i) - S_N ( t_{i+1} - s) )
\Lambda^{-\frac12} \Lambda^{\frac12} Q^{\frac12} \big\|_{\mathcal{L}_2}^2\right.
\\& \quad +\left.
\big\| ( C_N ( t_{i+1} - t_i) - C_N ( t_{i+1} - s) )
\Lambda^{-\frac12} \Lambda^{\frac12} Q^{\frac12} \big\|_{\mathcal{L}_2}^2\right)
\,\dd s
\Big)^{\frac12}
\\&  \leq\widehat C \bigg(
\int_{t_i}^{t_{i+1}}   (  s - t_i  )^2  \,\dd s
\bigg)^{\frac12}
\\&  \leq\widehat C \tau^{\frac32}.
\end{split}
\end{equation}
Collecting all the above estimates, we get the bound
\begin{equation}\label{eq:R-estimate}
\E \Big[ \big\| R_{i+1} \big\|_{\H^1}^2 \Big] \leq\widehat C \, \tau^3.
\end{equation}

It remains to estimate the term
$\E \Big[
    \big\| \E \left[ R_{i+1}|\mathcal{F}_{t_i} \right] \big\|_{\H^1}^2
    \Big]$.
First, observe that the stochastic integral vanishes under the conditional expectation.
Next, proceeding as in the proof of the estimates~\eqref{eq:R-sepatation}-\eqref{eq:J2'}, for dimension $d=1$,
we obtain the bound
\begin{equation*}
\begin{split}
\E \Big [ \big\| & \E \left[ R_{i+1} | \mathcal{F}_{t_i}\right]
               \big\|_{ \H^1 }^2 \Big ]
\\ & = \E \Big [ \Big\| \E \Big[ \int_{t_i}^{t_{i+1}}
\big[ E_N(t_{i+1} - s) {\bf F}_N(X^N(s))
- {\bf F}_N(X^N(t_{i+1})) \big] \,\dd s \big| \mathcal{F}_{t_i} \Big]
\Big\|_{  \H^1  }^2 \Big ]
\\& \leq
\E \Big [ \Big\|  \int_{t_i}^{t_{i+1}}
 \big( E_N(t_{i+1} - s) - I \big) {\bf F}_N( X^N (s) )  \,\dd s
 \Big\|_{  \H^1  }^2 \Big ]
\\& \quad
+ \E \Big [ \Big\| \E \Big[ \int_{t_i}^{t_{i+1}}
 \big( {\bf F}_N ( X^N(t_{i+1}) ) - {\bf F}_N ( X^N(s) ) \big) \,\dd s
 \big| \mathcal{F}_{t_i}  \Big]
\Big\|_{  \H^1  }^2 \Big ]
\\& \leq\widehat C \, \tau^4+
\E \Big [ \Big\| \E \Big[ \int_{t_i}^{t_{i+1}}
 \big( F_N ( v^N(t_{i+1}) ) - F_N ( v^N(s) ) \big) \,\dd s
 \big| \mathcal{F}_{t_i}  \Big] \Big\|^2 \Big ],
\end{split}
\end{equation*}
where \eqref{eq:J1} was used in the last inequality.
For dimension $d=2$, we obtain the bound
\begin{equation*}
\begin{split}
\E \Big [ \big\| & \E [ R_{i+1} | \mathcal{F}_{t_i}]
               \big\|_{ \H^1 }^2 \Big ]
\\& \leq\widehat C \, \tau^{4-2\epsilon}+
\E \Big [ \Big\| \E \Big[ \int_{t_i}^{t_{i+1}}
 \big( F_N ( v^N(t_{i+1}) ) - F_N ( v^N(s) ) \big) \,\dd s
 \big| \mathcal{F}_{t_i}  \Big] \Big\|^2 \Big ].
\end{split}
\end{equation*}
Applying a Taylor expansion to the nonlinearity $F_N$, we get,
for $s \in [ t_i , t_{i+1} ]$,
\begin{equation*}
\begin{split}
F_N ( v^N (t_{i+1}) ) & - F_N ( v^N(s) )
 = F_N'( v^N(s) ) (v^N(t_{i+1}) - v^N(s))
 \\& \quad + \int_0^1 F_N''(\chi(\lambda))
     \big ( v^N(t_{i+1}) - v^N(s),
      v^N(t_{i+1}) - v^N(s)  \big ) ( 1 - \lambda ) \,\dd \lambda,
\end{split}
\end{equation*}
where
$ \chi(\lambda) := v^N(s) + \lambda ( v^N(t_{i+1}) - v^N(s) ) $. The goal is now to estimate each term in this Taylor expansion.
First, one uses the definition of the mild solution~\eqref{eq:mild-space} of the semi-discrete problem and get the relation
\begin{equation*}
\begin{split}
v^N ( t_{i+1} ) - v^N ( s ) &=
  - \Lambda_N^{\frac12} S_N(t_{i+1} - s)  u^N ( s )
 + ( C_N(t_{i+1} - s) - I ) v^N ( s )
\\& \quad +
   \int_s^{t_{i+1}}  C_N(t_{i+1} - r)   F_N ( v^N(r) ) \,\dd r
   + \int_s^{t_{i+1}} C_N (t_{i+1} - r)  \,\dd W^Q(r)
\end{split}
\end{equation*}
which can then be inserted in the above Taylor expansion.
Next, owing to the property of stochastic integration and the conditional expectation, we get the relation
\begin{equation*}
\begin{split}
\E \Big[  & \int_{t_i}^{t_{i+1}}  F_N'( v^N (s) )
\int_s^{t_{i+1}} C_N( t_{i+1} - r )  \,\dd W^Q(r) \,\dd s
 \Big| \mathcal{F}_{t_i}  \Big]
\\&  = \int_{t_i}^{t_{i+1}} \E \Big[   \int_s^{t_{i+1}}
  F_N'( v^N (s) ) C_N( t_{i+1} - r )  \,\dd W^Q(r)
 \Big| \mathcal{F}_{t_i}  \Big]\, \dd s
\\&  = \int_{t_i}^{t_{i+1}} \E \Big[  \E \Big[  \int_s^{t_{i+1}}
  F_N'( v^N (s) ) C_N( t_{i+1} - r )  \,\dd W^Q(r)
  \Big| \mathcal{F}_s  \Big]
 \Big| \mathcal{F}_{t_i}  \Big]\, \dd s
\\&  = 0.
\end{split}
\end{equation*}
Next, by virtue of the triangle inequality and the property of the conditional expectation, we have
the decomposition
\begin{equation*}
\begin{split}
\E & \Big [ \Big\| \E  \Big[ \int_{t_i}^{t_{i+1}}
\big( F_N (v^N(t_{i+1})) - F_N (v^N(s)) \big) \,\dd s
 \big| \mathcal{F}_{t_i}  \Big] \Big\|^2 \Big ]
  \\ & \leq \Big\| \int_{t_i}^{t_{i+1}}
     F_N'(v^N(s))
    \big( - \Lambda_N^{\frac12} S_N(t_{i+1} - s) u^N ( s )
        \big)\,\dd s
     \Big\|_{L^2(\Omega;U)}^2
   \\ & \quad +  \Big\| \int_{t_i}^{t_{i+1}}
     F_N'(v^N(s)) ( C_N(t_{i+1} - s) - I ) v^N ( s ) \,\dd s
     \Big\|_{L^2(\Omega;U)}^2
   \\ & \quad + \Big\| \int_{t_i}^{t_{i+1}}
     F_N'(v^N(s)) \int_s^{t_{i+1}} C_N(t_{i+1}-r) F_N(v^N(r))
     \,\dd r \,\dd s   \Big\|_{L^2(\Omega;U)}^2
   \\ & \quad + \Big\| \int_{t_i}^{t_{i+1}} \!\!\!  \int_0^1 \!\!
     F_N''(\chi(\lambda)) \big( v^N(t_{i+1}) - v^N(s),
         v^N(t_{i+1}) - v^N(s) \big) ( 1 - \lambda )
     \,\dd \lambda \,\dd s   \Big\|_{L^2(\Omega;U)}^2
   \\& =: K_1 + K_2 + K_3 + K_4.
\end{split}
\end{equation*}
Our final task is to bound these four terms.

For the first term, $K_1$, one uses H\"{o}lder's inequality, \eqref{eq:cosine-sine} in Lemma~\ref{lem:operator-estimate} and
the spatial regularity of $X^N(t)$ to show that, in dimension $d=1$,
\begin{align*}
K_1 & = \Big\| \int_{t_i}^{t_{i+1}}
     F_N'(v^N(s)) ( S_N(t_{i+1}-s) - S_N(0) )
         \Lambda_N^{\frac12} u^N(s) \,\dd s
     \Big\|_{L^2(\Omega;U)}^2
  \\& \leq \left( \int_{t_i}^{t_{i+1}}  \big\|
  F_N'(v^N(s)) ( S_N(t_{i+1}-s) - S_N(0) )
         \Lambda_N^{\frac12} u^N(s) \big\|_{L^2(\Omega;U)}
         \,\dd s \right)^2
  \\&  \leq\widehat C \, \Big( \int_{t_i}^{t_{i+1}}
   \big( 1+\|v^N(s)\|_{L^{4(\gamma-1)}(\Omega;V)}^{\gamma-1} \big)
  \\& \qquad  \times
    \big\| ( S_N(t_{i+1}-s) - S_N(0) )
       \Lambda_N^{-\frac12} \Lambda_N u^N(s)
      \big\|_{L^4(\Omega;U)}   \,\dd s \Big)^2
    \\& \leq\widehat C \, \Big( \int_{t_i}^{t_{i+1}}
   \big( 1+\|v^N(s)\|_{L^{4(\gamma-1)}(\Omega;V)}^{\gamma-1} \big)
  \\& \qquad  \times
   \big\| ( S(t_{i+1}-s) - S(0) )
       \Lambda^{-\frac12} \Lambda u^N(s)
      \big\|_{L^4(\Omega;U)} \,\dd s \Big)^2
  \\& \leq\widehat C \, \tau^2 \left( \int_{t_i}^{t_{i+1}}
   \big( 1+\|v^N(s)\|_{L^{4(\gamma-1)}(\Omega;V)}^{\gamma-1} \big)
   \big\| u^N(s) \big\|_{L^4(\Omega;\dot{H}^2)}  \,\dd s \right)^2
  \\& \leq\widehat C \, \tau^4,
\end{align*}
where we used Assumption \ref{assum:nonlinearity} and H\"{o}lder's inequality in the second inequality.
For dimension $d=2$, using H\"older's inequality and \eqref{eq:sobolev}, it follows that,
for sufficiently small $\epsilon>0$,
\begin{equation*}
\begin{split}
K_1 &  \leq \left( \int_{t_i}^{t_{i+1}}  \big\|
  F_N'(v^N(s)) ( S_N(t_{i+1}-s) - S_N(0) )
         \Lambda_N^{\frac12} u^N(s) \big\|_{L^2(\Omega;U)}
         \,\dd s \right)^2
  \\& \leq\widehat C \, \Big( \int_{t_i}^{t_{i+1}}
   \Big\| \| F'(v^N(s)) \|_{L^{\frac 1\epsilon}}  \cdot
          \| ( S(t_{i+1}-s) - S(0) )
       \Lambda^{-\frac12}\Lambda u^N(s)
       \|_{L^{\frac 2{1-2\epsilon}}} \Big\|_{ L^2 ( \Omega; \R )} \,\dd s \Big)^2
  \\& \leq\widehat C \, \tau^{2-4\epsilon} \left( \int_{t_i}^{t_{i+1}}
   \big( 1+\|v^N(s)\|_{L^{4(\gamma-1)}(\Omega;\dot{H}^1)}^{\gamma-1} \big)
   \big\| u^N(s) \big\|_{L^4(\Omega;\dot{H}^2)}  \,\dd s \right)^2
  \\& \leq\widehat C \, \tau^{4-4\epsilon}.
\end{split}
\end{equation*}
Similarly to the above, for the second term in dimension $d=1$, one gets the bound
\begin{equation*}
K_2 \leq\widehat  C \, \tau^4.
\end{equation*}
And for dimension $d=2$, one obtains the bound
\begin{equation*}
K_2 \leq\widehat  C \, \tau^{4-4\epsilon}.
\end{equation*}
The estimate for the term $K_3$ follows similarly to the one for the term $K_1$,
but using the boundedness of the operator $C_N(t)$ and Lemma~\ref{lem:F-theta} instead.
We obtain the bound
\begin{equation*}
\begin{split}
K_3 & = \Big\| \int_{t_i}^{t_{i+1}}
     F_N'(v^N(s)) \int_s^{t_{i+1}} C_N(t_{i+1}-r) F_N(v^N(r))
     \,\dd r \, \dd s   \Big\|_{L^2(\Omega;U)}^2
  \\& \leq \left( \int_{t_i}^{t_{i+1}}  \Big\|
  F_N'(v^N(s)) \int_s^{t_{i+1}} C_N(t_{i+1}-r) F_N(v^N(r)) \dd r \Big\|_{L^2(\Omega;U)} \,\dd s \right)^2
  \\& \leq\widehat C \, \Big( \int_{t_i}^{t_{i+1}} \!\! \int_s^{t_{i+1}}\!\!
   \Big\| \| F'(v^N(s)) \|_{L^{\frac 1\epsilon}}  \cdot
          \| C_N(t_{i+1}-r) F_N(v^N(r))
       \|_{L^{\frac 2{1-2\epsilon}}} \Big\|_{ L^2 ( \Omega; \R )} \,\dd r \,  \,\dd s \Big)^2
  \\& \leq\widehat C \left( \int_{t_i}^{t_{i+1}} \int_s^{t_{i+1}}
   \big( 1+\|v^N(s)\|_{L^{4(\gamma-1)}(\Omega;\dot{H}^1)}^{\gamma-1} \big)
   \big\| F(v^N(r)) \big\|_{L^4(\Omega;\dot{H}^{2\epsilon})}
       \,\dd r \,  \dd s \right)^2
  \\& \leq\widehat C \, \tau^4.
\end{split}
\end{equation*}
For the last term $K_4$, first in dimension $d=1$, it follows from the Sobolev embedding inequality
$\dot{H}^{\frac12} \subset L^4$, the fact that $\dot{H}^{1} \subset V$ in dimension $d=1$,
and the spatio-temporal regularity of the numerical solution $v^N(t)$, that
\begin{equation*}
\begin{split}
K_4 &= \Big\| \int_{t_i}^{t_{i+1}}  \int_0^1
     F_N''(\chi(\lambda)) \big( v^N(t_{i+1}) - v^N(s),
         v^N(t_{i+1}) - v^N(s) \big)
         ( 1 - \lambda )
     \,\dd \lambda \, \dd s   \Big\|_{L^2(\Omega;U)}^2
  \\& \leq \left( \int_{t_i}^{t_{i+1}}  \int_0^1
    \Big\| F_N''(\chi(\lambda)) \big( v^N(t_{i+1}) - v^N(s),
         v^N(t_{i+1}) - v^N(s) \big) \Big\|_{L^2(\Omega;U)}
         \,\dd \lambda \, \dd s  \right)^2
  \\& \leq\widehat C \Big( \int_{t_i}^{t_{i+1}}  \int_0^1
  \big( 1+ \big\|v^N(s)+\lambda(v^N(t_{i+1}) - v^N(s)) \big\|_{L^{4(\gamma-2)}(\Omega;V)}^{\gamma-2} \big)
  \\& \qquad \times
 \big\| v^N(t_{i+1}) - v^N(s) \big\|_{L^8(\Omega;\dot{H}^{\frac12})}^2
   \,\dd \lambda \, \dd s \Big)^2
  \\& \leq\widehat C \Big( 1 + \sup_{s\in[0,T]} \|v^N(s)\|_{L^{4(\gamma-2)}(\Omega;\dot{H}^1)}^{\gamma-2}\Big)^2
 \cdot \Big( \int_{t_i}^{t_{i+1}}  ( t_{i+1} - s ) \,\dd s \Big)^2
  \\& \leq\widehat C \, \tau^4,
\end{split}
\end{equation*}
where we used Assumption \ref{assum:nonlinearity} and H\"{o}lder's inequality in the second inequality.
For dimension $d=2$, by the Sobolev embedding inequality
$\dot{H}^{\frac{1+\epsilon}{2+\epsilon}} \subset L^{4+2\epsilon}$, one gets
\begin{equation*}
\begin{split}
K_4 & \leq \left( \int_{t_i}^{t_{i+1}}  \int_0^1
    \Big\| F_N''(\chi(\lambda)) \big( v^N(t_{i+1}) - v^N(s),
         v^N(t_{i+1}) - v^N(s) \big) \Big\|_{L^2(\Omega;U)}
         \,\dd \lambda \, \dd s  \right)^2
  \\& \leq\widehat C \Big( \int_{t_i}^{t_{i+1}}  \int_0^1
  \Big\| \|F_N''(\chi(\lambda))\|_{L^{\frac 4 \epsilon + 2}}
\cdot \| v^N(t_{i+1}) - v^N(s)\|_{L^{4+2\epsilon}}^2
 \Big\|_{L^2(\Omega;\R)}
   \,\dd \lambda \, \dd s \Big)^2
  \\& \leq\widehat C \Big( 1 + \sup_{s\in[0,T]} \|v^N(s)\|_{L^{4(\gamma-2)}(\Omega;\dot{H}^1)}^{\gamma-2}\Big)^2
 \cdot \Big( \int_{t_i}^{t_{i+1}} \!\!\!
 \| v^N(t_{i+1}) - v^N(s)
 \|_{L^8(\Omega;\dot{H}^{\frac{1+\epsilon}{2+\epsilon}})} ^2
 \,\dd s \Big)^2
  \\& \leq\widehat C \, \Big( \int_{t_i}^{t_{i+1}}
 ( t_{i+1} - s )^{\frac 2{2+\epsilon}} \,\dd s \Big)^2
 \\& \leq\widehat C \, \tau^{ 4 - \frac {2\epsilon}{2+\epsilon}}.
\end{split}
\end{equation*}
Collecting all the above estimates, for dimension $d=1$, one arrives at the bound
\begin{equation*}
\E \Big[
\big\| \E [ R_{i+1}|\mathcal{F}_{t_i} ] \big\|_{\H^1}^2
  \Big]
  \leq\widehat C \, \tau^4.
\end{equation*}
For dimension $d=2$, one gets the bound
\begin{equation*}
\E \Big[
\big\| \E [ R_{i+1}|\mathcal{F}_{t_i} ] \big\|_{\H^1}^2
  \Big]
  \leq\widehat C \, \tau^{4-4\epsilon}.
\end{equation*}
The above estimates, in conjunction with the bound~\eqref{eq:R-estimate} and Proposition~\ref{prop:upper-bounds},
finish the proof of the theorem.
\end{proof}

At last, gathering Theorem \ref{thm:space} with Theorem \ref{thm-main2},
we get the strong convergence rates of the fully discrete scheme \eqref{eq:full-discretization}.
\begin{corollary}[Strong convergence rates of the full discretization]
Under the setting of  Theorem \ref{thm:space} and Theorem \ref{thm-main2}, there exists a positive constant $\widehat C$, independent of $N,M \in \N$, such that
\begin{equation*}
\sup_{0 \leq m \leq M}
\big\| X(t_m) -  X_{N,m}  \big\|_{L^2(\Omega;\H^1)}
\leq \widehat C \lambda_N^{ -\frac12 } + \widehat C \, \tau \quad \text{for dimension} \quad d=1,
\end{equation*}
and for sufficiently small $\epsilon > 0$,
\begin{equation*}
\sup_{0 \leq m \leq M}
\big\| X(t_m) -  X_{N,m}  \big\|_{L^2(\Omega;\H^1)}
\leq \widehat C \lambda_N^{ -\frac12 + \epsilon} +
\widehat C \, \tau^{1-\epsilon}, \quad \text{for dimension} \quad d=2.
\end{equation*}
\end{corollary}

\section{Numerical experiments}\label{sec;Numerical experiments}

We conclude this paper by illustrating the above theoretical findings with numerical experiments.
Let us consider the stochastic wave equation with nonlinear damping
\begin{equation}\label{numerical-example}
\begin{split}
\left\{
    \begin{array}{lll}
    \dd u = v\, \dd t,
    \\
    \dd v = \Delta u\, \dd t + (v - v^3)\, \dd t + \dd W^Q(t),
    \quad \text{in} \,\, \mathcal{D} \times (0, 1]
    \end{array}\right.
\end{split}
\end{equation}
with $\mathcal{D} = (0,1)^d$, for $d=1,2$, equipped with homogeneous Dirichlet boundary condition. The Fourier coefficients of the initial positions are randomly set to $0$ or $1$ and the obtained vector is then divided by the eigenvalues of the Laplacian. The initial velocity is set to be $v(0)=0$.
The covariance operators of the infinite-dimensional Wiener process
${W^Q(t)}$, for $t \in [0,1]$, are chosen as $Q=\Lambda^{-1.005-d/2}$.
In what follows, we use the fully discrete scheme \eqref{eq:full-discretization} to approximate solution to the SPDE~\eqref{numerical-example}.
The strong error bounds are measured in the mean-square sense and
the expectations are approximated by computing averages over $1000$ samples.
We have checked empirically that the Monte--Carlo errors are negligable.

\begin{figure}[!htb]
  \centering
\begin{varwidth}[t]{\textwidth}
  \includegraphics[width=2.5in]{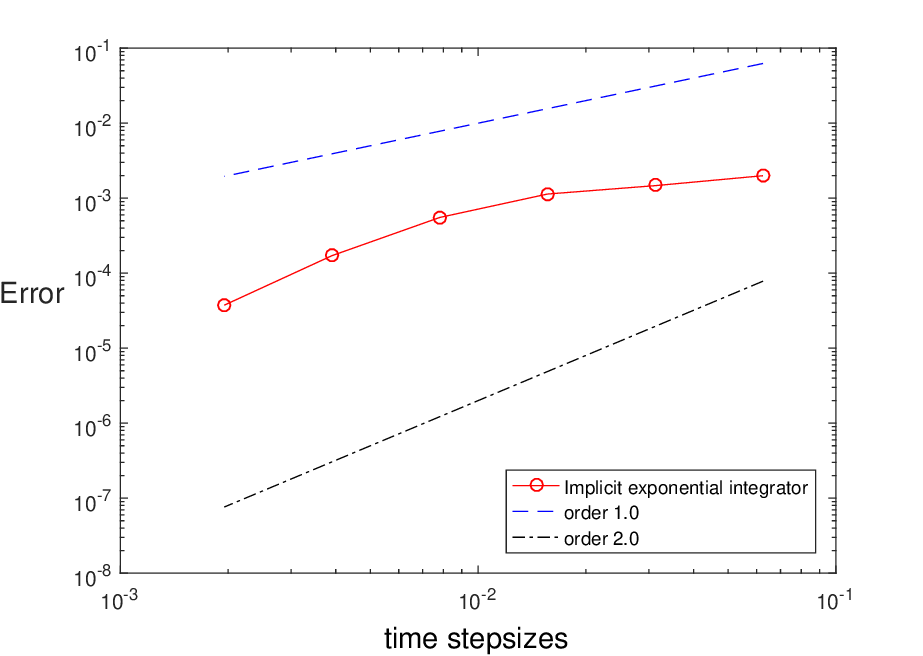}
 \end{varwidth}
 \quad
 \begin{varwidth}[t]{\textwidth}
  \includegraphics[width=2.5in]{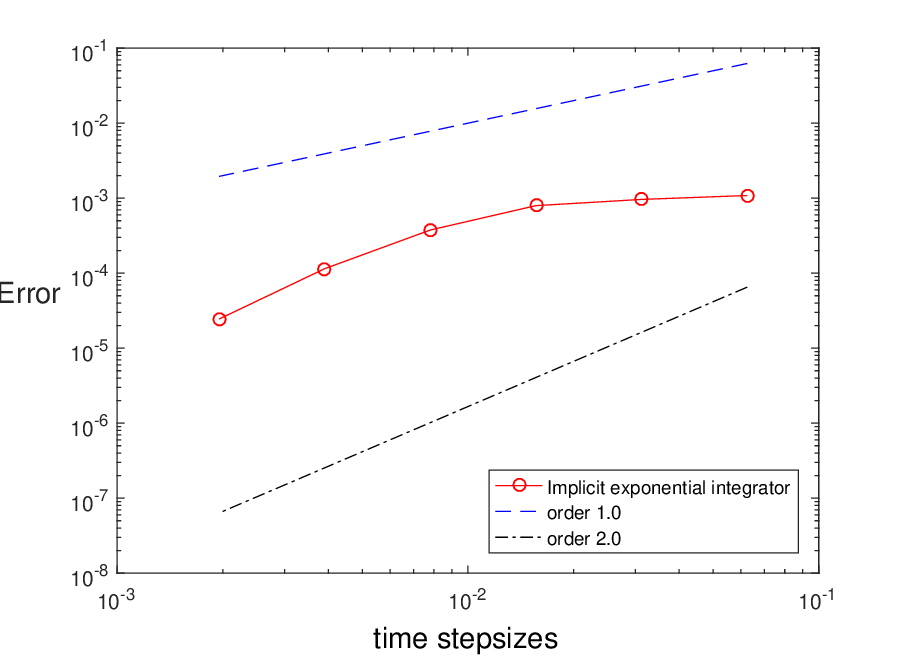}
 \end{varwidth}
  \caption{Strong convergence rates of the temporal discretization given by the implicit exponential Euler scheme (IEE) (left: $d=1$, right: $d=2$).}
 \label{F1}
\end{figure}

We now fix $N = 100$ for $d=1$ and $N = 30$ for $d=2$ and investigate the strong convergence rates in time by using the stepsizes $\tau = 2^{-j},j=4,5,\ldots,9$.
The reference solution is computed numerically by using the proposed time integrator with the reference stepsize $\tau= 2^{-10}$. In the loglog plots from Figure~\ref{F1}, one can observe that the approximation errors of the implicit exponential Euler scheme decrease with order $1$.
This is consistent with our theoretical findings.

\begin{figure}[!htb]
  \centering
\begin{varwidth}[t]{\textwidth}
  \includegraphics[width=2.5in]{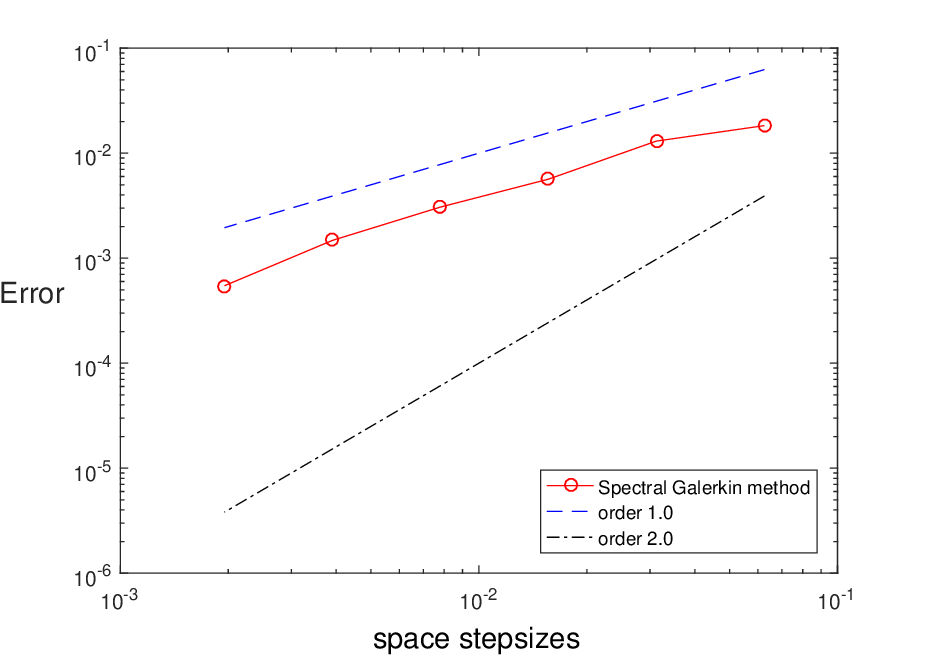}
 \end{varwidth}
  \caption{Strong convergence rates of the spatial discretization given by the spectral Galerkin method.}
 \label{F2}
\end{figure}

To visually illustrate the error in space for $d=1$, we compute a reference solution by using the proposed numerical scheme with $\tau=2^{-5}$ and $N=2^{10}$.
The resulting errors of six different mesh parameters
$N = 2^i, i = 4,5,\ldots,9$ are plotted in Figure~\ref{F2}
on a log--log scale.
One can observe that the expected convergence rates agree with those indicated in Theorem~\ref{thm:space}.

\section*{Conclusion and perspectives}

This work provides a comprehensive analysis of the strong convergence rates for the full discretization of the stochastic wave equation (SWE)
with nonlinear damping in dimension one and two.
The stochastic wave equation is discretized in space by a spectral Galerkin method and in time by a modified implicit Euler scheme.
The main idea is to achieve upper mean-square error bounds (cf. Proposition \ref{prop:upper-bounds}),
which enables us to prove mean-square convergence rates for the considered SWE without requiring an a priori high-order moment estimates of the fully discrete solution.
We prove that the strong rates of convergence are $1$ in time and $\frac12$ in space, in dimension one. In dimension two, the analysis is more involved and
the strong rates of convergence are shown to be $1-$ in time and $\frac12-$ in space. Beside these theoretical insights, the paper also offers a numerical validation of the proved convergence estimates.

Some fundamental questions are left open for possible future works. Let us mention a few of them:
\begin{enumerate}
\item We have only considered stochastic wave equation perturbed by additive noise in $d=1,2$. 
     The error analysis for the space dimension $d = 3$ or higher is non-trivial, more efforts should be paid to develop new techniques to solve it.
     It may also be interesting to study more complex problems, for instance problems with multiplicative noise.
    Models driven by multiplicative noise are more involved, in particular in the presence of a non-globally Lipschitz nonlinearity.
    We are not aware of well-posedness results for such problems or a convergence analysis of a numerical scheme in the existing literature.

\item In real applications, with complicated domains for instance, it may be difficult or impossible to find the eigenvalues and eigenfunctions of the Laplace operator $\Lambda$.
    In such cases, one is perhaps not able to use a spectral Galerkin approximation.
    The finite element method (FEM) could then be used to open a new path to numerical simulations.
    This numerical scheme enjoys several advantages as, for example, the possibility to consider SWEs with irregular domains.
    However, the error analysis of the FEM for SWEs is much more involved than for the spectral Galerkin method in this work.
    In particular, when our approach for deriving moment bounds of numerical solutions is applied to the finite element method,
    essential difficulties occur: the finite element projection $P_h$ destroys some of the dissipativity properties of the nonlinearity.
    A deeper numerical analysis of the FEM when applied to SWEs is thus needed.

\item 
The present scheme requires the computation of exponential matrix and some iteration which might be costly and hence further work on other scheme is important.
The literature offers many advanced numerical techniques for treating non-globally Lipschitz and superlinearly growing nonlinearities.
    One of them is the taming technique used for parabolic problems (see for example \cite{brehier2022weak,cai2021weak,cai2023strong,wang2024linearly,wang2020efficient} and references therein).
    To develop and analyze taming techniques in the context of time integrators for the stochastic wave equation with nonlinear damping
    could possibly also provide efficient numerical schemes for such SPDEs.
\end{enumerate}

\end{document}